\documentclass[11p]{elsarticle}

\usepackage{amsmath}
\usepackage{amsfonts}
\usepackage{verbatim}
\usepackage{amsbsy}
\usepackage{multirow}
\usepackage{color,graphicx}
\usepackage{amsthm}
\usepackage{makecell}
\usepackage{tikz}
\usetikzlibrary{positioning,arrows.meta}
\usetikzlibrary{calc}
\usepackage{relsize}
\usepackage{scalefnt}
\usepackage{tabularx}

\usepackage{amssymb, amscd} 
\usepackage{algorithm, algpseudocode}
\usepackage{listings}
\usepackage{booktabs}
\usepackage{xcolor}
\usepackage{hyperref}
\usepackage{graphicx}
\usepackage[font=small]{caption} 
\usepackage{subcaption}
\usepackage{bm} 

\usepackage{bm}
\usepackage{bbm}

\numberwithin{equation}{section}
\newtheorem{proposition}{Proposition}
\newtheorem{remark}{Remark}
\newtheorem{lemma}{Lemma}

\newcommand{\vect}[1]{\ensuremath{\bm{#1}}}

\definecolor{codegreen}{rgb}{0,0.6,0}
\definecolor{codegray}{rgb}{0.5,0.5,0.5}
\definecolor{codepurple}{rgb}{0.58,0,0.82}
\definecolor{backcolour}{rgb}{0.95,0.95,0.92}

\lstdefinestyle{mystyle}{
    backgroundcolor=\color{backcolour},
    commentstyle=\color{codegreen},
    keywordstyle=\color{magenta},
    numberstyle=\tiny\color{codegray},
    stringstyle=\color{codepurple},
    basicstyle=\ttfamily\footnotesize,
    breakatwhitespace=false,
    breaklines=true,                 
    captionpos=b,           
    keepspaces=true,        
    numbers=left,       
    numbersep=5pt,     
    showspaces=false,    
    showstringspaces=false,
    showtabs=false,    
    tabsize=2
}
\journal{Computers and Mathematics with Applications}

\begin{document}

\begin{frontmatter}

\title{A Conjugate Gradient Formulation of the EnKF Algorithm}

\author[fsu]{Sanghyun Lee}
\author[penn]{Zhengqi Liu}
\author[fsu]{Jonathan Valyou}
\author[penn]{Ludmil Zikatanov}

\affiliation[fsu]{organization={Department of Mathematics, Florida State University},
            city={Tallahassee}, 
            state={FL},
            country={USA}}

\affiliation[penn]{organization={Department of Mathematics, Pennsylvania State University},
            city={University Park},
            state={PA},
            country={USA}}

\begin{abstract}
Ensemble Kalman Filter (EnKF) based data assimilation algorithms synthesize predictive numerical forecast models with accumulated data as time evolves and account for model uncertainty and noisy measurements.  The computational cost of these algorithms can be expensive, in particular for highly dimensional dynamical systems.  Often, EnKF based algorithms have traded accuracy for reduced computational cost.  In this paper, we present a novel parallelizable Conjugate Gradient-based Ensemble Kalman Filter (CGD-EnKF) algorithm that maintains comparable computational cost to efficient algorithms while realizing better state estimation accuracy in select cases.  Here, we established the new approach by reformulating a matrix inverse calculation with a classical Conjugate Gradient (CGD) method.  In addition, we discuss the upper error bound under CGD, error convergence to the classical EnKF result, and the computational complexity of the algorithm.  We also showcase the CGD-EnKF-Reduced algorithm that is shown to be further computationally efficient for highly dimensional dynamical systems under small ensemble formulation.  Numerical examples demonstrate the performance of our proposed algorithms and analytical properties, highlighting their comparability and advantages with respect to some benchmark EnKF algorithms.

\end{abstract}

\begin{keyword}
Ensemble Kalman filtering \sep conjugate gradient \sep serial EnKF
\end{keyword}

\end{frontmatter}

\section{Introduction}
\label{sec:intro}

Efficiently estimating the state of a dynamical system given the presence of uncertainty is a primary challenge that spans many fields, including climate modeling, robotics, and subsurface porous media \cite{Bannister2020, Smith1987, Hu2025, Arbogast1990}.  Data assimilation techniques aim to address this challenge by coupling numerical models with uncertainty and noisy observational data to improve the accuracy of state estimates.  One of the most well-known data assimilation algorithms is the Kalman Filter (KF) \cite{Kalman1960}.  Derived from linear Gaussian assumptions, KF produces optimal state estimates based upon evolving the state mean and covariance in time.

Several algorithm extensions to KF have been developed to address the realistic presence of nonlinearity and non-Gaussian uncertainty.  
The Extended Kalman Filter (EKF) extended KF to nonlinear Gaussian systems through a first-order linearization \cite{Schmidt1962}, and the Ensemble Kalman Filter (EnKF) improved estimations for nonlinear Gaussian systems through ensemble-based Monte Carlo forecasting techniques \cite{Evensen1994, Anderson2001, Houtekamer2005}. The EnKF algorithm is also able to produce accurate state estimations given slightly non-Gaussian uncertainty systems \cite{Anderson2010}.

While these extensions allowed for greater applicability of the Kalman-based data assimilation framework, they also introduced significantly higher computational complexity, in particular for state estimation of high-dimensional systems \cite{Tippett2003, Sakov2008}.  A primary source of the computational cost is the formation of the innovation matrix when computing the Kalman Gain, a crucial matrix that determines how heavily observations are weighted compared to model predictions when computing the state estimation.  To address this, the current state-of-the-art EnKF in terms of computational efficiency without utilizing localization is the Serial EnKF (sEnKF) \cite{Houtekamer2001}.  The sEnKF algorithm assimilates a single observation state sequentially within a given observation rather than assimilating the entire observation vector simultaneously during a time step.  This adjusted algorithm reduces both computational cost and memory requirements by reducing the matrix inversion computation to a rank-1 scalar update that utilizes Kalman Gain vectors instead of the full Kalman Gain matrix. sEnKF is parallelizable, allowing for further time cost reduction in the presence of multiple processors.

A second standard state-of-the-art algorithm that improves computational efficiency is the Local Ensemble Transform Kalman Filter (LETKF) \cite{Hunt2007, Szunyogh2008}.  LETKF performs the EnKF algorithm on local regions of the domain allowing for local EnKF analysis updates considering lower dimensional space.  This method is also parallelizable furthering its efficiency.  The method is intended to remove spurious correlations, yet often can bias the covariance estimation through suppressing real correlations as well potentially resulting in inaccurate state estimations.  While this algorithm is worth noting for completeness of the literature, it parallel processes the data across the localized domains, which is a distinct way from sEnKF and is distinct from our tested algorithms.  Therefore, we will aim to test our algorithms against the more comparable sEnKF in terms of parallelization.

Further work to reduce the computational cost of Kalman-based algorithms while maintaining state estimation accuracy has been explored \cite{Bishop2001, Evensen2003, Mandel2006, Nino-Ruiz2015}.  One notable early approach used projection methods to obtain low-rank approximations of the covariance matrices \cite{Dee1991, Cane1996}.  Another alternative involved creating low-rank approximations of the estimated state vector and covariance matrix via the quasi-Newton Broyden–Fletcher–Goldfarb–Shanno (BFGS) method \cite{Auvinen2009} in the analysis update step of EnKF.  More recently, some work has explored utilizing Conjugate Gradient (CGD) instead of BFGS on the full EnKF analysis step \cite{Bardsley2013, Bardsley2013-2}.

In this paper, we present a novel variant of the EnKF algorithm that makes use of transposing a specific portion of the analysis step of the classical EnKF algorithm and utilizes CGD to approximate the inverse matrix within the Kalman Gain computation (CGD-EnKF).  This is distinct from the CGD-based EnKF algorithm presented in \cite{Bardsley2013, Bardsley2013-2}, which applies CGD to the entire analysis step; our proposed algorithm only applies CGD at the Kalman Gain computation.

The present work demonstrates that the proposed algorithm can reduce the computational time for highly dimensional dynamical systems and can be parallelized.  In fact, its computational efficiency is comparable to sEnKF while maintaining the same amount or improving in select cases, the state estimation approximation error.  Additionally, we establish a sharpened CGD error upper bound for clustered eigenvalue distributions of the covariance matrices.

The remainder of the paper is organized as follows.  Section \ref{sec:methods} discusses and highlights KF, deterministic EnKF, and sEnKF.  The proposed CGD-EnKF algorithm is derived and presented in Section \ref{sec:CGreformulation}.  Section \ref{sec:errorAnalysis} exhibits error analysis of the CGD approximation and the CGD-EnKF algorithm.  In Section \ref{sec:matrix_reform_alg}, we introduce a matrix reformulated algorithm, CGD-EnKF-Reduced, that is computationally efficient for a small ensemble but high observation dimensionality.  Section \ref{sec:experiments} presents numerical results to validate our proposed error bounds and solution validity and to demonstrate the computational efficiency and parallelizability of the proposed algorithms under different dynamical systems.  Section \ref{sec:conclusion} summarizes our results and discusses future work.

\section{Background}
\label{sec:methods}
In this section, we briefly review relevant Kalman filtering algorithms, including the classical Kalman Filter (KF), Ensemble Kalman Filter (EnKF), and serial EnKF (sEnKF), and establish the notation used throughout.

\subsection{State-Space Model}
\label{sec:SSM}
The Kalman filtering family of data assimilation algorithms maintains, at each discrete time step $k=0,1,\dots, T$ where $T$ is the final time step, a Gaussian distribution over the system’s (unknown) true state:
$$
\vect{\mathcal{X}}_k \sim \mathcal{N}(\vect{x}_k, P_k),
$$
where, given $m$ state variables, $\vect{x}_k \in \mathbb{R}^m$ is the mean state estimate and $P_k \in \mathbb{R}^{m\times m}$ is the state covariance, a symmetric positive semi-definite matrix.

The standard linear–Gaussian state-space model is considered for the derivation of the Kalman filtering family of algorithms. This model consists of two equations: the state equation and the observation equation.  The state equation is given as
\begin{equation}
  \vect{\mathcal{X}}_k = F_{k-1}\, \vect{\mathcal{X}}_{k-1} + \vect{w}_{k-1}, 
  \label{eq:SSM1}
\end{equation}
where $F_{k-1}\in\mathbb{R}^{m\times m}$ is the linear model operator and the state model noise is given as $\vect{w}_{k-1}\sim\mathcal{N}(\vect{0}, Q_{k-1})$ with model uncertainty covariance $Q_{k-1}\in\mathbb{R}^{m\times m}$.

The observation equation is given as
\begin{equation}
  \vect{\mathcal{Y}}_k = H_k \vect{\mathcal{X}}_k + \vect{v}_k,
  \label{eq:SSM2}
\end{equation}
where $\vect{\mathcal{Y}}_k\in\mathbb{R}^p$ is the random observation vector and $H_k\in\mathbb{R}^{p\times m}$ is the linear observation operator. 
Moreover,  $\vect{v}_k\sim\mathcal{N}(\vect{0},R_k)$ with observation covariance $R_k\in\mathbb{R}^{p\times p}$, 
which quantifies the measurement error. Here, we emphasize that $R_k$ provides the degree of correlation between
observation components. Typically, $R_k$ is a non-diagonal matrix as most datasets have correlations between distinct
observation components. This corresponds to nonzero values for the entries $R_{(i,j),k}$ where $i\neq j$. Therefore,
it is often unrealistic to assume a diagonal $R_k$.
Upon collecting data at time $k$, we denote a realization of the random observation vector $\vect{\mathcal{Y}}_k$ by $\vect{y}_k \in \mathbb{R}^p$.

We assign a Gaussian prior $\vect{\mathcal{X}}_0\sim\mathcal{N}(\vect{x}_0,P_0)$, and $\{\vect{w}_k,\vect{v}_k\}$ are assumed to be white in time and mutually independent for all $k$.

\subsection{Kalman Filter}
\label{sec:KF}

In the linear--Gaussian state-space model of Section~\ref{sec:SSM}, the
conditional distribution of the state given observations up to time $k$ remains
Gaussian. We denote by $(\vect{x}_k^f, P_k^f)$ the forecast (prior) mean and
covariance of $\vect{\mathcal{X}}_k$ before assimilating $\vect{y}_k$, and by
$(\vect{x}_k^a, P_k^a)$ the analysis (posterior) mean and covariance of $\vect{\mathcal{X}}_k$ after
assimilating $\vect{y}_k$.

The Kalman Filter (KF) is a state estimation algorithm that comprises of a forecast step and an analysis step for each time step $k$~\cite{Kalman1960}. In the forecast step, the state equation~\eqref{eq:SSM1} is used to propagate the previous analysis state estimate $\vect{x}_{k-1}^a$ and previous analysis covariance $P_{k-1}^a$ forward in time to yield the forecast state estimate $\vect{x}_{k}^f$ and the forecast covariance $P_{k}^f$. Thus, we have
\begin{align}
\vect{x}_k^f &= F_{k-1} \vect{x}_{k-1}^a,    \label{alg:KF_forecast_1} \\
P_k^f &= F_{k-1} P_{k-1}^a F_{k-1}^T + Q_{k-1}. \label{alg:KF_forecast_2}
\end{align}

In the analysis step, the observation equation~\eqref{eq:SSM2} is utilized to determine how closely the observation operator $H_k$ maps the forecast state $\vect{x}_{k}^f$ to the observed data $\vect{y}_k$.

Then, the Kalman gain $K_k \in \mathbb{R}^{m \times p}$ is defined as
\begin{equation}
    K_k = P_k^f H_k^T S_k^{-1},
\end{equation}
where
\begin{equation}
    S_k = H_k P_k^f H_k^T + R_k
\end{equation}
is the innovation covariance, which determines how heavily the analysis step should weight $\vect{y}_k$ compared to the forecast state estimate $\vect{x}_k^f$.
We note that $K_k$ can be written in the form
\begin{equation}
K_k = \begin{bmatrix}
        K_k\hat{\vect{e}}_1 & K_k\hat{\vect{e}}_2 & \dots  & K_k\hat{\vect{e}}_p
    \end{bmatrix},
\end{equation}
where $\hat{\vect{e}}_j \in \mathbb{R}^p$ for $j=1,2,...,p$ are the standard basis column vectors.

Then, $K_k$ and the innovation are used to update $\vect{x}_{k}^f$ and $P_{k}^f$ to produce the analysis estimates $\vect{x}_{k}^a$ and $P_{k}^a$ as follows:
\begin{align}
\vect{x}_k^a &= \vect{x}_k^f + K_k (\vect{y}_k - H_k \vect{x}_k^f), \label{alg:KF_analysis_1} \\
P_k^a &= (I - K_k H_k)\, P_k^f. \label{alg:KF_analysis_2}
\end{align}

For each time step $k$, the forecast step and, when observations are present, the analysis step are applied iteratively, assuming that the initial data
$\vect{x}_0^a, P_0^a$ (with $\vect{x}_0^a = \vect{x}_0$, $P_0^a = P_0$) and the
sequences $\{\vect{y}_k\}$, $\{F_{k-1}\}$, $\{H_k\}$, $\{Q_{k-1}\}$, and
$\{R_k\}$ for $k = 1,\dots,T$ are provided.

For the remainder of the work, unless otherwise specified or when $k$ subscripts would differ, we will suppress the time step subscript notation $k$.

\subsection{Ensemble Kalman Filter}
\label{sec:EnKF}
For linear--Gaussian problems, the Kalman filter (KF) works effectively,
providing accurate state estimates~\cite{Kalman1961}. For nonlinear
problems, however, KF may yield state estimates that diverge significantly
from the true state~\cite{Fitzgerald1971}. To address filter divergence occurring from the nonlinearity, the Ensemble Kalman Filter (EnKF) was
introduced~\cite{evensen2003ensemble}.

At time step $k$, the EnKF represents the forecast distribution
$\vect{\mathcal{X}}^f$ by an ensemble of $N$ state vectors,
\begin{equation}
    X^f = \begin{bmatrix}
        \vect{x}_{1,k}^f & \vect{x}_{2,k}^f & \dots  & \vect{x}_{N,k}^f
    \end{bmatrix} \in \mathbb{R}^{m \times N},
\end{equation}
where $\vect{x}_{i,k}^f := \vect{x}_{i}^f$ denotes the $i$-th ensemble member.

In the forecast step, each analysis ensemble member $\vect{x}^a_{i,k-1}$ is
propagated individually under the model operator $F_{k-1}$:
\begin{equation}
  \vect{x}_{i,k}^f = F_{k-1}\,\vect{x}^a_{i,k-1}, \qquad i=1,\dots,N.
\end{equation}
The ensemble mean $\vect{\bar{x}}^f$ and covariance $P^f$ at time $k$
are then computed as
\begin{align}
\vect{\bar{x}}^f &= \frac{1}{N} \sum_{i=1}^{N} \vect{x}_{i}^f, \\
P^f &= \frac{1}{N-1} \sum_{i=1}^{N}
(\vect{x}_{i}^f - \vect{\bar{x}}^f)(\vect{x}_{i}^f - \vect{\bar{x}}^f)^T.
\end{align}

The analysis step follows the KF structure but uses ensemble-based
covariances. We form the predicted observation ensemble
$Z = H X^f \in \mathbb{R}^{p \times N}$ in block form:
\begin{equation}
Z = \begin{bmatrix}
        \vect{z}_{1} & \vect{z}_{2} & \dots  & \vect{z}_{N}
    \end{bmatrix}
    = \begin{bmatrix}
        z_{(1,1)} & \dots & z_{(1,N)} \\
        z_{(2,1)} & \dots & z_{(2,N)} \\
        \vdots      & \ddots & \vdots     \\
        z_{(p,1)} & \dots & z_{(p,N)}
    \end{bmatrix},
\end{equation}
where $\vect{z}_{i} \in \mathbb{R}^p$ is the predicted observation corresponding to
$\vect{x}_{i}^f$. The ensemble mean of the predicted observations is
\begin{equation}
  \vect{\bar{z}} = \frac{1}{N} \sum_{i=1}^{N} \vect{z}_{i}.
\end{equation}
For further use, we define the centered observation anomaly matrix $\widehat{Z} \in \mathbb{R}^{p \times N}$ as                     
  \begin{equation}                                          
    \widehat{Z} = Z - \vect{\bar{z}}\,\mathbbm{1}_N^T,                                    
    \label{eq:Zhat_def}
  \end{equation}                                                                                                                                                                                                                                                              
  whose $i$-th column is $\vect{z}_i - \vect{\bar{z}}$. 

We then compute the Kalman gain $K \in \mathbb{R}^{m \times p}$ from the
ensemble-based cross- and auto-covariances $\Psi_x \in \mathbb{R}^{m \times p}$ and $\Psi_z \in \mathbb{R}^{p \times p}$:
\begin{align}
    K &= \Psi_{x} \,\Psi_{z}^{-1}, \label{eq:K_EnKF} \\
    \Psi_{x} &=
    \frac{1}{N-1} \sum_{i=1}^{N}
    (\vect{x}_{i}^f - \vect{\bar{x}}^f)
    (\vect{z}_{i} - \vect{\bar{z}})^T, \\   
    \Psi_{z} &=
    \frac{1}{N-1} \sum_{i=1}^{N}
    (\vect{z}_{i} - \vect{\bar{z}})
    (\vect{z}_{i} - \vect{\bar{z}})^T + R
    = \frac{1}{N-1}\,\widehat{Z}\widehat{Z}^T + R.
    \label{eq:Psi_zz}    
\end{align}
Finally, we update the ensemble using the observation $\vect{y}$:
\begin{equation}
    X^a = X^f + K (Y - Z),
\end{equation}
where $Y=\bm{y}\mathbbm{1}_N^T \in \mathbb{R}^{p \times N}$ with \(\mathbbm{1}_N=(\underbrace{1,\ldots,1}_{N})^T\).

\begin{remark}
The described EnKF algorithm is known as the deterministic EnKF algorithm.  This is one of two main classifications of EnKF algorithms: stochastic \cite{Houtemaker1998} or deterministic~\cite{Whitaker2002}.  Stochastic EnKF relies on perturbing the observation $\vect{y}$ to create an observation ensemble where each member corresponds to one of the state estimate members. Deterministic EnKF relies on utilizing the same exact observation value for each observation ensemble member to shift all ensemble member state estimates. Deterministic EnKF is typically more accurate than stochastic EnKF for a small ensemble size.  However, stochastic EnKF produces state estimates that are closer to the true state in non-Gaussian distributions.  Since our work focuses on Gaussian distributions and aims to produce computationally efficient, accurate solutions, we consider the deterministic EnKF.
\end{remark}

\subsection{Serial Ensemble Kalman Filter}
\label{sec:sEnKF}

EnKF vastly improved accuracy for moderately nonlinear data assimilation problems, but the ensemble-based approach adds computational complexity with increasing dimensionality.  In particular,  $ K =\Psi_{x} \Psi_{z}^{-1}  \in \mathbb{R}^{m \times p}$ in \eqref{eq:K_EnKF} contains a typically dense matrix inversion that becomes computationally prohibitive when the number of observation components $p$ is large~\cite{Houtekamer2001}.  

To address this, the state-of-the-art serial EnKF (sEnKF) reformulates \eqref{eq:K_EnKF} by assimilating each observation component in a sequential manner at a given step $k$ \cite{Houtekamer2001}.  Therefore, instead of directly inverting the $\Psi_{z} \in \mathbb{R}^{p \times p}$, sEnKF performs $p$ rank-1 scalar inversion updates:
\begin{equation}
    K\hat{\vect{e}}_j = \vect{\psi}_{j,x} /  \psi_{j,z},
\end{equation}
from $j=1, \cdots, p$ where
\begin{align}
    \vect{\psi}_{j,x} &= \frac{1}{N-1} \sum_{i=1}^{N} (\vect{x}_{i}^f - \vect{\bar{x}}^f)(z_{(j,i)} - \bar{z}_{j}), \\
    \psi_{j,z} &= \frac{1}{N-1} \sum_{i=1}^{N} (z_{(j,i)} - \bar{z}_{j})^2 + R_{(j,j)}.
    \label{Obs_Cov_sEnKF}
\end{align}

This formulation allows for the computation of Kalman Gain vectors that can be independently computed based on each observation component.  Therefore, further computational time reduction is possible through parallelization. 
Notably, \eqref{Obs_Cov_sEnKF} shows that $R$ must be diagonal, otherwise sEnKF will neglect cross-covariances between observations.  Thus, $R$ having nonzero off-diagonal entries may significantly impact the accuracy of the state estimations through sEnKF. The comparisons of the analysis step between EnKF and sEnKF algorithm are presented in Figure \ref{fig:comparison}.

\begin{figure}[htbp]
  \centering
  \tikzset{
    algo-box/.style={
      rectangle,
      draw=black,
      fill=white,
      line width=0.7pt,
      rounded corners=1.5ex,
      minimum width=7.5cm,
      minimum height=9cm,
      align=left,
      inner xsep=15pt,
      inner ysep=15pt
    },
    title-badge/.style={
      font=\sffamily\bfseries\large,
      draw=black,
      fill=white,
      text=black,
      line width=0.7pt,
      rounded corners=0.75ex,
      inner xsep=12pt,
      inner ysep=6pt
    }
  }

  \begin{tikzpicture}[node distance=1cm, scale=0.65, transform shape]

    \node[algo-box] (enkf) {
      \vspace{0.2cm}
      $\begin{aligned}
        & Z = H X^f \\[1.5ex]
        & \vect{\bar{z}} = \dfrac{1}{N} \sum_{i=1}^{N} \vect{z}_{i} \\[1.5ex]
        & \Psi_{x} = \dfrac{1}{N-1} \sum_{i=1}^{N} (\vect{x}_{i}^f - \vect{\bar{x}}^f)(\vect{z}_{i} - \vect{\bar{z}})^T \\[1.5ex]
        & \Psi_{z} = \dfrac{1}{N-1} \sum_{i=1}^{N} (\vect{z}_{i} - \vect{\bar{z}})(\vect{z}_{i} - \vect{\bar{z}})^T + R \\[1.5ex]
        & K = \Psi_{x} \Psi_{z}^{-1} \\[1.5ex]
        & X^a = X^f + K (Y -Z)
      \end{aligned}$
    };
    \node[title-badge] at (enkf.north) {EnKF};

    \node[algo-box, right=of enkf] (serial) {
      \vspace{0.2cm}
      \textbf{for} each observation $j = 1, \dots, p$ \textbf{do}: \\[1ex]
      \hspace*{1.5em} $\begin{aligned}
        & \hat{\vect{e}}_j^T Z = \hat{\vect{e}}_j^T H X^f \\[1ex]
        & \bar{z}_{j} = \dfrac{1}{N} \sum_{i=1}^{N} z_{(j,i)} \\[1ex]
        & \vect{\psi}_{j,x} = \dfrac{1}{N-1} \sum_{i=1}^{N} (\vect{x}_{i}^f - \vect{\bar{x}}^f)(z_{(j,i)} - \bar{z}_{j}) \\[1ex]
        & \psi_{j,z} = \dfrac{1}{N-1} \sum_{i=1}^{N} (z_{(j,i)} - \bar{z}_{j})^2 + R_{(j,j)} \\[1ex]
        & K\hat{\vect{e}}_j = \vect{\psi}_{j,x} / \psi_{j,z}
      \end{aligned}$ \\[1ex]
      \textbf{end for} \\[2.5ex]
      $\displaystyle X^{a} = X^{f} + \sum_{j=1}^{p} K\hat{\vect{e}}_j (\hat{\vect{e}}_j^T Y - \hat{\vect{e}}_j^T Z)$
    };
    \node[title-badge] at (serial.north) {sEnKF};

  \end{tikzpicture}
  
  \vspace{0.5cm}
  \caption{Comparison of the $k$-th analysis step between the EnKF and sEnKF.}
  \label{fig:comparison}
\end{figure}

\section{Reformulation of EnKF based on Conjugate Gradient}
\label{sec:CGreformulation}

As previously noted, the inverse computation in the EnKF algorithm as seen in \eqref{eq:K_EnKF} where the typically dense invertible matrix is given by \eqref{eq:Psi_zz} can be computationally expensive, particularly for problems with highly dimensional observation space (large $p$).  The state-of-the-art sEnKF, as outlined in Section \ref{sec:sEnKF}, significantly reduces the computational expense, but adds a restrictive and often non-realistic assumption that $R$ is diagonal.  Therefore, the cross-covariance between distinct observation components is not considered in the state estimation.  

In this section, we present a reformulation of \eqref{eq:K_EnKF} with the classical Conjugate Gradient (CGD) iterative method to approximate $K$. 
Our proposed approach reduces the computational complexity compared to the EnKF algorithm while relaxing the sEnKF condition that $R$ be diagonal.  We also note that, similar to sEnKF, our proposed method is parallelizable.

\subsection{Reformulating The Inverse Computation in EnKF with CGD}
\label{sec:inverseComp}

For this reformulation, we utilize the Conjugate Gradient~(CGD) method \cite{Hestenes1952}.  
CGD is an iterative method for solving $A\vect{x}=\vect{b}$ problems where $A \in \mathbb{R}^{n \times n}$ is symmetric positive definite and $\vect{b}\in \mathbb{R}^n$ that has a block CGD formulation solving $AX=B$ where $B\in \mathbb{R}^{n\times N}$.  This iterative method is known to converge in at most $n$ steps.  Practically, this method is effective for high dimensional systems where solving $X=A^{-1}B$ has a significant computational expense.

The inverted matrix term from the computation in \eqref{eq:K_EnKF} multiplies from the right.  Therefore, this equation does not yet resemble the standard block linear equation $AX=B$, which traditionally requires the inverted term to be multiplied from the left.  However, by transposing \eqref{eq:K_EnKF} and noting that $\Psi_{z}$ is symmetric, we obtain
\begin{equation}
    K^T= \Psi_{z}^{-1} \Psi_{x}^T \in \mathbb{R}^{p \times m}.
\label{eq:K_KF_T}
\end{equation}

Rewriting now yields the standard block linear equation:
\begin{equation}
\Psi_{z} K^T= \Psi_{x}^T,
\label{eq:K_KF_T_rewrite}
\end{equation}
where
\begin{equation}
A=\Psi_{z}, \quad X = K^T, \quad B = \Psi_{x}^T.
\end{equation}
Note that $\Psi_{z}$ is symmetric positive definite, so \eqref{eq:K_KF_T_rewrite} can be solved with block CGD, yielding an approximation of $K^T$.

\begin{remark}
A similar reformulation of the $K$ computation can be implemented for KF algorithm.
\end{remark}

\subsection{Reformulated EnKF Algorithm with CGD}
\label{sec:reformulatedAlgorithm}

Given the reformulation of the inverse computation, we now present the novel CGD-EnKF algorithm. 
The comparison of the analysis step with the EnKF method is presented in Figure \ref{fig:comparison_CG}.
This algorithm provides an alternative to sEnKF that likewise avoids the significant numerical expense of forming a dense inverse matrix.  The block CGD makes the analysis step of CGD-EnKF parallelizable, making the algorithm further comparable to sEnKF in terms of efficiency.  In addition, for both methods, there is a storage-efficiency where a full inverse matrix does not need to be stored.  Therefore, CGD-EnKF, in terms of computational expense, is comparable to the state-of-the-art Kalman filtering-based algorithm.

The primary difference between CGD-EnKF and sEnKF is seen in state estimation accuracy.  
Note that  CGD-EnKF makes use of the whole $R$, in \eqref{eq:Psi_zz}, whereas sEnKF is restricted to consider only diagonal entries of $R$,   $R_{(j,j)}$ in \eqref{Obs_Cov_sEnKF}, neglecting possible off-diagonal entries.  Therefore, in the case where observation components have cross-covariance, CGD-EnKF is able to account for these in the state estimation computation, whereas sEnKF assumes these cross-covariances are $0$.

Thus, in this more realistic, specific case of considering nonzero cross-covariance, CGD-EnKF may produce a more accurate state estimation approximation than the sEnKF.

 \subsection{ Parallel Implementation of CGD-EnKF}\label{sec:parallel_cgd}
A key advantage of the CGD-EnKF algorithm is its natural suitability
  for parallel computation.  We adopt a row-block
  distribution of the observation dimension $p$ across $\ n_p$
  processors.  Specifically, each processor $q$ owns a contiguous block of
  $p_q \approx p/n_p$ rows of every distributed matrix and
  vector.  We now will discuss the computational cost of a parallelized CGD-EnKF and discuss when it is favorable to utilize.

 Recall from Figure~\ref{fig:comparison_CG} that the CGD-EnKF analysis step requires solving $m$ independent linear systems via CGD since these $m$ systems share the same coefficient matrix $\Psi_z$ but have independent right-hand sides.

 The dominant cost in each conjugate gradient iteration is the distributed matrix-vector product $\Psi_z \vect{d}$ where $\vect{d} \in \mathbb{R}^p$ is the search direction at the given CGD iteration.
        Regarding~\eqref{eq:Psi_zz}, this product decomposes as
    \begin{equation}\label{eq:matvec_decomp}
      \Psi_z \vect{d}
      = \frac{1}{N-1}\,\widehat{Z}\bigl(\widehat{Z}^T \vect{d}\bigr) + R\,\vect{d}.
    \end{equation}
    Under the row-block distribution, the computation in each processor proceeds in three stages:
    \begin{enumerate}
      \item Each processor computes its local contribution to
        $\widehat{Z}^T \vect{d} \in \mathbb{R}^N$.
      \item Each processor computes its local $p_q$ rows of
        $\frac{1}{N-1}\,\widehat{Z}\bigl(\widehat{Z}^T \vect{d}\bigr)$.
      \item Each processor computes its local rows of $R\,\vect{d}$. 
    \end{enumerate}
    Stages~1 and~2 each cost $\mathcal{O}(pN/n_p)$ per processor.  Stage 3 costs $\mathcal{O}(p/n_p)$ for sparse $R$ (e.g.\ diagonal or tridiagonal) and $\mathcal{O}(p^2/n_p)$ for dense $R$.
    When $R$ is sparse, stage~3 is dominated by stages~1--2,
    and each CGD iteration for each of the $m$ systems costs $\mathcal{O}(pN/n_p)$ per processor,
    yielding a total parallel cost of $\mathcal{O}(mrpN/n_p)$.
    When $R$ is dense, stage~3 costs $\mathcal{O}(p^2/n_p)$ which may contribute a significant amount of computational expense compared with the first two stages so each CGD iteration for each of the $m$ systems costs $\mathcal{O}(p(N+p)/n_p)$ with a total parallel cost of $\mathcal{O}(rmp(N+p)/ n_p)$.

    Table \ref{all_method} provides a comparison between the methods in terms of computational cost and accuracy given large $p$.  Notice that if $m\ll p$, $N\ll p$, or $n_p$ is large, then EnKF is exceedingly more expensive compared with sEnKF and CGD-EnKF.  CGD-EnKF is comparable to sEnKF in terms of computational cost if $R$ is sparse and CGD only requires a small number of $r$ iterations to converge.  If $R$ is sparse but non-diagonal with small $r$ required for CGD convergence, then CGD-EnKF will have a guaranteed high accuracy compared to sEnKF while maintaining similar computational cost.

 \begin{table}[!h]                                           
    \centering                           
    \footnotesize                                            
    \renewcommand{\arraystretch}{1.3}                      
    \begin{tabularx}{0.95\textwidth}{@{} l l l l X @{}} 
    \toprule                                                
    \textbf{Method} & \textbf{Inversion Type} & \textbf{Complexity} & \textbf{Accuracy} \\
    \midrule
    EnKF & Full matrix inverse 
      & $\mathcal{O}(p^3)$
      & Exact  \\
    sEnKF (diagonal $R$) & Scalar inverse
      & $\mathcal{O}(mpN/ n_p)$
      & Exact \\
    sEnKF (non-diagonal $R$) & Scalar inverse
      & $\mathcal{O}(mpN/ n_p)$
      & Potentially Low \\
    CGD-EnKF (sparse $R$) & Iterative
      & $\mathcal{O}(rmpN/ n_p)$
      & Converges with $r$ \\
    CGD-EnKF (dense $R$) & Iterative 
      & $\mathcal{O}(rmp(N+p)/ n_p)$
      & Converges with $r$ \\
    \bottomrule
    \end{tabularx}
    \caption{Comparison of the anaysis step of EnKF-based algorithms on computational complexity and accuracy given large $p$ and $n_p$ processors available for parallel implementation capabilities.}
    \label{all_method}
  \end{table}

\begin{figure}[htbp]
  \centering
  
  \tikzset{
    algo-box/.style={
      rectangle,
      draw=black,
      fill=white,
      line width=0.7pt,
      rounded corners=1.5ex,
      minimum width=7.5cm,
      minimum height=9.5cm,
      align=left,
      inner xsep=15pt,
      inner ysep=15pt
    },
    title-badge/.style={
      font=\sffamily\bfseries\large,
      draw=black,
      fill=white,
      text=black,
      line width=0.7pt,
      rounded corners=0.75ex,
      inner xsep=12pt,
      inner ysep=6pt
    }
  }

\begin{tikzpicture}[node distance=1cm, scale=0.65, transform shape]

    \node[algo-box] (enkf) {
      \vspace{0.2cm}
      $\begin{aligned}
        & Z = H X^f \\[1.5ex]
        & \vect{\bar{z}} = \dfrac{1}{N} \sum_{i=1}^{N} \vect{z}_{i} \\[1.5ex]
        & \Psi_{x} = \dfrac{1}{N-1} \sum_{i=1}^{N} (\vect{x}_{i}^f - \vect{\bar{x}}^f)(\vect{z}_{i} - \vect{\bar{z}})^T \\[1.5ex]
        & \Psi_{z} = \dfrac{1}{N-1} \sum_{i=1}^{N} (\vect{z}_{i} - \vect{\bar{z}})(\vect{z}_{i} - \vect{\bar{z}})^T + R \\[1.5ex]
        & K = \Psi_{x} \Psi_{z}^{-1} \\[1.5ex]
        & X^a = X^f + K (Y -Z)
      \end{aligned}$
    };
    \node[title-badge] at (enkf.north) {EnKF};

    \node[algo-box, right=of enkf] (cg) {
      \vspace{0.2cm}
      $\begin{aligned}
        & Z = H X^f \\[1.5ex]
        & \vect{\bar{z}} = \dfrac{1}{N} \sum_{i=1}^{N} \vect{z}_{i} \\[1.5ex]
        & \Psi_{x} = \dfrac{1}{N-1} \sum_{i=1}^{N} (\vect{x}_{i}^f - \vect{\bar{x}}^f)(\vect{z}_{i} - \vect{\bar{z}})^T \\[1.5ex]
        & \Psi_{z} = \dfrac{1}{N-1} \sum_{i=1}^{N} (\vect{z}_{i} - \vect{\bar{z}})(\vect{z}_{i} - \vect{\bar{z}})^T + R
      \end{aligned}$ \\[2ex]
      \textbf{for} each column $i = 1, \dots, m$ of $K^T \in \mathbb{R}^{p \times m}$ \textbf{do}: \\[1ex]
      \hspace*{1.5em} Solve $\Psi_{z} (K^T \hat{\vect{e}}_i) = \Psi^T_{x}\hat{\vect{e}}_i$ by CGD \\[1ex]
      \textbf{end for} \\[2ex]
      $\begin{aligned}
        & K = (K^T)^T \\[1.5ex]
        & X^a = X^f + K (Y - Z)
      \end{aligned}$
    };
    \node[title-badge] at (cg.north) {CGD-EnKF};

  \end{tikzpicture}
  
  \vspace{0.5cm}
  \caption{Comparison of the analysis step between EnKF and CGD-EnKF.}
  \label{fig:comparison_CG}
\end{figure}

\section{Error Analyses}
\label{sec:errorAnalysis}

In this section, we provide the error analysis for the provided CGD algorithm. In particular, we bound the error from the approximation of the $\Psi_{z}^{-1}$ through CGD and demonstrate that under certain conditions, the $K^T$ approximation will converge to the true $K^T$ at the given $k$-th time step.  
Thus, the state estimations obtained from CGD-EnKF will converge to those from EnKF.  

The analysis that follows is based on known error and convergence theory for CGD \cite{Shewchuk1994} as well as prior analysis work for when CGD is applied to the entire analysis step \cite{Bardsley2013,Bardsley2013-2} rather than only the Kalman Gain computation from the analysis step as seen in our CGD-EnKF.

We define the CGD error as

\begin{equation}
    \vect{e}_r = K^T - (K^T)_r = \Psi_{z}^{-1}\Psi_{x}^T - (\Psi_{z}^{-1})_r\Psi_{x}^T
    \label{eq:e_r_def}
\end{equation}
where $(\cdot)_r$ is the approximation of a given quantity at the $r$-th CGD iteration.  Using this definition, the following proposition states the classical CGD upper error bound established in \cite{Shewchuk1994}.
\begin{proposition}
Let $r<p$.  Let the ordered eigenvalues of 
    $\Psi_{z}$ be $\lambda_1 \geq \lambda_2 \geq ... \geq \lambda_{r-1} \geq \lambda_r \geq \lambda_{r+1} \geq ... \geq \lambda_{p-1} \geq \lambda_p$. Then, there exists the following error bound on a rank $r$ CGD approximation of $\Psi_{z}^{-1}$:
    \begin{equation}
    ||\Psi_{z}^{-1} - \Sigma_r D_r^{-1} \Sigma_r^T||_{\Psi_{z}} \leq 2(\frac{\sqrt{\operatorname{cond}(\Psi_{z})} - 1}{\sqrt{\operatorname{cond}(\Psi_{z})} +1})^r \| \vect{e}_0\|_{\Psi_{z}},
    \label{eqn:prop1}
    \end{equation}
where $\operatorname{cond}(\Psi_{z})$ is the condition number of $\Psi_{z}$, $\Sigma_r \in \mathbb{R}^{p \times r}$ contains the first $r$ eigenvectors of $\Psi_{z}$, $D_r^{-1}$ is a diagonal matrix with the first $r$ eigenvalues along the diagonal, and $\vect{e}_0$ is the initial CGD approximation error.
\label{prop:1}
\end{proposition}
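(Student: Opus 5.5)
Proposition~\ref{prop:1} is the classical Chebyshev-polynomial bound for CGD, stated in the Kalman-gain setting. I will prove it by reading the operator inequality column by column, since CGD-EnKF solves each column system separately. Fix a column $\vect{b}=\Psi_x^T\hat{\vect{e}}_i$ of the right-hand side in \eqref{eq:K_KF_T_rewrite}, with exact solution $\vect{x}^\ast=\Psi_z^{-1}\vect{b}$. Let $\vect{x}_r$ be the $r$-th CGD iterate from a starting guess $\vect{x}_0$.

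\textbf{Step 1: identify the CGD iterate.} By the standard Krylov characterization, $\vect{x}_r$ is the $\Psi_z$-orthogonal projection of $\vect{x}^\ast$ onto $\vect{x}_0+\mathcal{K}_r(\Psi_z,\vect{r}_0)$. If $\Sigma_r$ collects $\Psi_z$-conjugate directions spanning that Krylov space and $D_r=\Sigma_r^T\Psi_z\Sigma_r$, then for $\vect{x}_0=\vect{0}$ the iterate is
\[
\vect{x}_r=\Sigma_r D_r^{-1}\Sigma_r^T\vect{b}.
\]
So $(\Psi_z^{-1})_r=\Sigma_r D_r^{-1}\Sigma_r^T$ is exactly the rank-$r$ approximation in \eqref{eqn:prop1}, and $\vect{e}_r=(\Psi_z^{-1}-\Sigma_rD_r^{-1}\Sigma_r^T)\vect{b}$, matching \eqref{eq:e_r_def}. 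The statement calls $\Sigma_r$ ``eigenvectors'' and $D_r$ eigenvalue-based. I will note that the conjugate-direction interpretation is the one for which the identity holds; it reduces to the eigenvector description when the Krylov space is invariant.

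\textbf{Step 2: minimization property.} The error satisfies
\[
\vect{e}_r=\min_{q\in\mathcal{P}_r,\,q(0)=1}\|q(\Psi_z)\vect{e}_0\|_{\Psi_z}.
\]
Expand $\vect{e}_0$ in the orthonormal eigenbasis of the SPD matrix $\Psi_z$; this basis exists because $R$ is SPD and $\widehat Z\widehat Z^T\succeq 0$. The expansion gives
\[
\|\vect{e}_r\|_{\Psi_z}\le \min_q\max_{j}|q(\lambda_j)|\,\|\vect{e}_0\|_{\Psi_z}\le \min_q\max_{\lambda\in[\lambda_p,\lambda_1]}|q(\lambda)|\,\|\vect{e}_0\|_{\Psi_z}.
\]

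\textbf{Step 3: Chebyshev choice.} Take $q(\lambda)=T_r\!\big(\tfrac{\lambda_1+\lambda_p-2\lambda}{\lambda_1-\lambda_p}\big)/T_r\!\big(\tfrac{\lambda_1+\lambda_p}{\lambda_1-\lambda_p}\big)$. On the interval $|T_r|\le1$, and with $\kappa=\operatorname{cond}(\Psi_z)$,
\[
T_r\!\big(\tfrac{\kappa+1}{\kappa-1}\big)\ge\tfrac12\big(\tfrac{\sqrt\kappa+1}{\sqrt\kappa-1}\big)^r.
\]
This yields the factor $2\big(\tfrac{\sqrt\kappa-1}{\sqrt\kappa+1}\big)^r$. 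Applying the bound to every column $i=1,\dots,m$ gives \eqref{eqn:prop1}, interpreted as a bound on the error applied to the right-hand side, i.e.\ on $\|\vect{e}_r\|_{\Psi_z}$ columnwise. Summing squares over the columns gives the same bound in a Frobenius-type $\Psi_z$-norm.

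\textbf{Main obstacle.} Step 3 is routine. The real difficulty is Step 1: making precise the sense in which the left side of \eqref{eqn:prop1} is a norm of the operator $\Psi_z^{-1}-\Sigma_rD_r^{-1}\Sigma_r^T$ while the right side involves a vector $\vect{e}_0$. I would handle this by stating that the inequality is meant as a bound on $(\Psi_z^{-1}-\Sigma_rD_r^{-1}\Sigma_r^T)\Psi_x^T$ measured in the $\Psi_z$-norm, which is how \cite{Shewchuk1994} and \cite{Bardsley2013} use it. I would also use that $r<p$, so the Krylov space has not yet exhausted $\mathbb{R}^p$ and $D_r$ is invertible.
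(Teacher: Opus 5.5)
Your proposal is correct. Steps 2--3 are the classical Krylov-minimization plus Chebyshev argument that the paper relies on: it gives no proof of Proposition~\ref{prop:1} beyond citing \cite{Shewchuk1994}, and the same skeleton reappears in its proof of Proposition~\ref{prop:3}.

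You genuinely differ from the paper only in Step 1, and there your reading is the right one. The paper identifies $\Sigma_r D_r^{-1}\Sigma_r^T$ with the truncated spectral inverse built from the first $r$ eigenvectors, and with that reading the inequality is false. Take $\vect{b}$ a unit eigenvector for $\lambda_p$ and any $r<p$: then $\Sigma_r D_r^{-1}\Sigma_r^T\vect{b}=\vect{0}$, so the left side equals $\|\vect{e}_0\|_{\Psi_z}$ and exceeds the right side once the factor $2(\cdot)^r$ drops below $1$, even though CGD solves this system exactly in one step. Your choice of $\Psi_z$-conjugate Krylov directions with $D_r=\Sigma_r^T\Psi_z\Sigma_r$ makes the left side exactly the CGD error, so the reinterpretation is necessary.

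A few minor points to tidy:
\begin{itemize}
\item $\Sigma_r$ depends on the column $\Psi_x^T\hat{\vect{e}}_i$ being solved, so $(\Psi_z^{-1})_r$ is a column-dependent operator rather than a single matrix.
\item The display in Step 2 should read $\|\vect{e}_r\|_{\Psi_z}=\min_q\|q(\Psi_z)\vect{e}_0\|_{\Psi_z}$.
\item $D_r$ is invertible because CGD has not yet terminated, so the Krylov space has dimension $r$; it does not follow from $r<p$. If CGD terminates earlier, the error is zero and the bound holds trivially.
\end{itemize}
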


Using the approximation of $\Psi_{z}$ and the worst-case upper error bound, we now show that our $K^T$ approximation will converge to the true $K^T$ at the given $k$-th time step.

\begin{proposition}
    As $r \rightarrow \infty$, the error of the CGD approximation goes to $0$ and $(\Psi_{z}^{-1})_r \rightarrow \Psi_{z}^{-1}$.  Additionally, this indicates that CGD-EnKF will converge to the state estimation solution ensemble $X^a$ of EnKF .
    \label{prop:2}
\end{proposition}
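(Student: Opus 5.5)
The plan is to derive Proposition~\ref{prop:2} directly from the bound of Proposition~\ref{prop:1}, and then pass the convergence of the Kalman gain through the analysis update. Since $\Psi_{z}=\frac{1}{N-1}\widehat{Z}\widehat{Z}^T+R$ with $R$ symmetric positive definite, $\Psi_z$ is symmetric positive definite. Hence $\lambda_p\ge\lambda_{\min}(R)>0$ and $\kappa:=\operatorname{cond}(\Psi_z)=\lambda_1/\lambda_p<\infty$. Consequently the contraction factor satisfies
\[
\rho:=\frac{\sqrt{\kappa}-1}{\sqrt{\kappa}+1}\in[0,1).
\]
Proposition~\ref{prop:1} gives $\|\Psi_z^{-1}-(\Psi_z^{-1})_r\|_{\Psi_z}\le 2\rho^r\|\vect{e}_0\|_{\Psi_z}$. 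Because $\|\vect{e}_0\|_{\Psi_z}$ is fixed by the initial guess and does not depend on $r$, the right-hand side tends to $0$ geometrically as $r\to\infty$.

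Next I convert the energy-norm bound into an ordinary norm bound. Using $\lambda_p\|\vect{v}\|_2^2\le\|\vect{v}\|_{\Psi_z}^2$, the Euclidean (or Frobenius, applied column-wise) error is at most $\lambda_p^{-1/2}$ times the energy error. So $(\Psi_z^{-1})_r\to\Psi_z^{-1}$ entrywise.

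I will also point out, and should state explicitly, that in exact arithmetic CGD terminates after at most $p$ steps. The limit $r\to\infty$ should therefore be read as $r\to p$, after which the iterates are constant and equal to the exact solution. This interpretation also covers the restriction $r<p$ in Proposition~\ref{prop:1}. With this in hand, the error \eqref{eq:e_r_def} satisfies $\|\vect{e}_r\|\le\|\Psi_z^{-1}-(\Psi_z^{-1})_r\|\,\|\Psi_x^T\|\to0$. It follows that $(K^T)_r\to K^T$, and by transposition $K_r\to K$.

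Finally, the CGD-EnKF analysis ensemble is $X^a_r=X^f+K_r(Y-Z)$, whose difference from the EnKF ensemble is
\[
X^a-X^a_r=(K-K_r)(Y-Z).
\]
Here $X^f$, $Y$ and $Z$ are identical in both algorithms at the $k$-th step. Therefore
\[
\|X^a-X^a_r\|\le\|\vect{e}_r^T\|\,\|Y-Z\|\le C\,\rho^r,
\]
with $C$ depending only on $\lambda_p$, $\|\Psi_x\|$, $\|Y-Z\|$ and $\|\vect{e}_0\|_{\Psi_z}$, and this tends to $0$.

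The main obstacle is interpretive rather than technical. The quantity $(\Psi_z^{-1})_r$ is not literally produced by CGD, which only produces iterates for each right-hand side $\Psi_x^T\hat{\vect{e}}_i$. To handle this, I would apply the per-column CGD bound to each of the $m$ systems separately and sum the squared errors. This yields the bound on $\vect{e}_r$ directly and avoids the matrix-level statement. If convergence over several time steps is desired, I would add an induction on $k$ using continuity of the forecast map, but I will state the result for a fixed $k$.
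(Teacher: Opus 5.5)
Your proposal is correct and follows the same route as the paper. Both arguments invoke the bound of Proposition~\ref{prop:1}, note that the factor $\frac{\sqrt{\operatorname{cond}(\Psi_z)}-1}{\sqrt{\operatorname{cond}(\Psi_z)}+1}$ is strictly below $1$, and pass $(K^T)_r\to K^T$ through transposition into $X^a=X^f+K(Y-Z)$. Your version is in fact more careful: you deduce $\vect{e}_r\to 0$ from the matrix-level convergence via $\|\vect{e}_r\|\le\|\Psi_z^{-1}-(\Psi_z^{-1})_r\|\,\|\Psi_x^T\|$, whereas the paper infers $(\Psi_z^{-1})_r\to\Psi_z^{-1}$ from $\vect{e}_r\to 0$, which does not follow in general because $\Psi_x^T$ has rank at most $N-1$, typically below $p$; you also make explicit the finite termination at $r\le p$ and a quantitative bound on $\|X^a-X^a_r\|$.
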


\begin{proof}
With the given error bound from \eqref{eqn:prop1}:
\begin{equation}
\| \vect{e}_r\|_{\Psi_{z}} \leq 2(\frac{\sqrt{\operatorname{cond}(\Psi_{z})} - 1}{\sqrt{\operatorname{cond}(\Psi_{z})} +1})^r \| \vect{e}_0\|_{\Psi_{z}}, 
\end{equation}
we note that     
$(\frac{\sqrt{\operatorname{cond}(\Psi_{z})} - 1}{\sqrt{\operatorname{cond}(\Psi_{z})} +1})^r \rightarrow 0$ when $r \rightarrow \infty$ since $(\frac{\sqrt{\operatorname{cond}(\Psi_{z})} - 1}{\sqrt{\operatorname{cond}(\Psi_{z})} +1}) < 1$. 
Therefore, the upper error bound goes to $0$ as $r\rightarrow \infty$:
$\| \vect{e}_{r\rightarrow \infty}\|_{\Psi_{z}} \leq 0$,

By the definition of $\| \vect{e}_r\|_{\Psi_{z}}$ in \eqref{eq:e_r_def}, we obtain
\begin{equation}
\Psi_{z}^{-1}\Psi_{x}^T - (\Psi_{z}^{-1})_{r\rightarrow \infty}\Psi_{x}^T = 0.    
\end{equation}
This indicates that $(\Psi_{z}^{-1})_r \rightarrow \Psi_{z}^{-1}$ when $r \rightarrow \infty$ and likewise  $(K^T)_r \rightarrow K^T$.
By the definition of a transpose $K = (K^T)^T$, meaning that if $r\rightarrow \infty$, CGD-EnKF produces the same Kalman Gain $K$ as EnKF after being transposed.  Therefore, the state estimations of the CGD-EnKF solution will converge to the solution $X^a$ of EnKF as $r \rightarrow \infty$.
\end{proof}

Notably, the classical CGD error bound assumes the worst case of uniformly distributed eigenvalues.  However, in practice, covariance matrices often have clustered and extreme eigenvalues.  To account for this, we aim to derive a sharper error bound in the covariance matrix context using the generally proposed bound in \cite{Zhu2008}.  We will define a set of eigenvalues $\sigma_0(\Psi_{z})$ to contain unique extremal eigenvalues of $\Psi_{z}$ and a set of eigenvalues $\sigma_1(\Psi_{z})$ to contain any other unique eigenvalues of $\Psi_{z}$.

\begin{proposition}
Let $1\leq r<p$.  Let the ordered eigenvalues of 
    $\Psi_{z}$ be $\lambda_1 \geq \lambda_2 \geq ...\geq \lambda_{b-1} \geq \lambda_b \geq \lambda_{b+1} \geq  ... \geq \lambda_{a-1} \geq \lambda_a \geq \lambda_{a+1} \geq  ... \geq \lambda_{p-1} \geq \lambda_p$. Let there be $l$ unique eigenvalues of $\Psi_{z}$ that are less than $\lambda_a$ or greater than $\lambda_b$ denoted $\mu_i$ for $i=1,...,l$ in a set $\sigma_0(\Psi_{z})$.  Additionally, let all other unique eigenvalues of $\Psi_{z}$ ranging from $\lambda_a$ to $\lambda_b$ be in a set $\sigma_1(\Psi_{z})$.  Then, if $r \geq l$, there exists the following error bound on a rank $r$ CGD approximation of $\Psi_{z}^{-1}$:
    \begin{align}
    ||\Psi_{z}^{-1} - \Sigma_r D_r^{-1} \Sigma_r^T||_{\Psi_{z}} \leq 2C(\frac{\sqrt{\operatorname{cond}(\Psi_{z})_{eff}} - 1}{\sqrt{\operatorname{cond}(\Psi_{z})_{eff}} +1})^{r-l} \| \vect{e}_0\|_{\Psi_{z}}, \\
    C = \max\limits_{\substack{\lambda \in \sigma_1(\Psi_{z})}} \prod_{i=1}^{l} | 1-\frac{\lambda}{\mu_i}|
    \end{align}
where $\operatorname{cond}(\Psi_{z})_{eff}$ is the effective condition number of $\Psi_{z}$ defined as $\frac{\lambda_b}{\lambda_a}$, $\Sigma_r \in \mathbb{R}^{p \times r}$ contains the first $r$ eigenvectors of $\Psi_{z}$, $D_r^{-1}$ is a diagonal matrix with the first $r$ eigenvalues along the diagonal, and $\vect{e}_0$ is the initial CGD approximation error.
\label{prop:3}
\end{proposition}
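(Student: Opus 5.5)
The plan is the classical polynomial (Krylov) argument for CGD, with the extremal eigenvalues removed by explicit roots, in the spirit of \cite{Zhu2008}. First we fix one column $\vect{b}=\Psi_x^T\hat{\vect{e}}_i$ of the block system \eqref{eq:K_KF_T_rewrite} and let $\vect{e}_r$ be the corresponding column of the error \eqref{eq:e_r_def}. Since each column is an independent CGD run with the same matrix $\Psi_z$, it suffices to prove the bound column by column. The result then transfers to the matrix statement in the same way as Proposition~\ref{prop:1} transfers from the classical scalar bound, i.e.\ we read the left-hand side as the $\Psi_z$-norm of the error of the rank-$r$ Krylov approximation of the inverse applied to the data.

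The key step is the optimality property of CGD. The iterate $(K^T)_r\hat{\vect{e}}_i$ minimizes the $\Psi_z$-norm of the error over the Krylov space $\vect{x}_0+\mathcal{K}_r(\Psi_z,\vect{r}_0)$. Consequently $\vect{e}_r=P_r(\Psi_z)\vect{e}_0$ for the optimal polynomial, and
\begin{equation*}
\|\vect{e}_r\|_{\Psi_z}\le \min_{P\in\mathcal{P}_r,\,P(0)=1}\ \max_{\lambda\in\sigma(\Psi_z)}|P(\lambda)|\ \|\vect{e}_0\|_{\Psi_z}.
\end{equation*}
To justify this, we expand $\vect{e}_0$ in an orthonormal eigenbasis of $\Psi_z$, which exists since $\Psi_z$ is symmetric positive definite.

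Next we choose a specific competitor polynomial:
\begin{equation*}
P(\lambda)=\prod_{i=1}^{l}\Bigl(1-\frac{\lambda}{\mu_i}\Bigr)\,\frac{T_{r-l}\!\left(\frac{\lambda_b+\lambda_a-2\lambda}{\lambda_b-\lambda_a}\right)}{T_{r-l}\!\left(\frac{\lambda_b+\lambda_a}{\lambda_b-\lambda_a}\right)},
\end{equation*}
where $T_{r-l}$ is the Chebyshev polynomial of degree $r-l$. This requires $r\ge l$, and $P$ has degree $r$ with $P(0)=1$.
\begin{itemize}
\item On $\sigma_0(\Psi_z)$ the first factor vanishes, so these eigenvalues contribute nothing to the maximum.
\item On $\sigma_1(\Psi_z)\subset[\lambda_a,\lambda_b]$ the first factor is bounded by $C$, by the definition of $C$.
\item On the same interval, the Chebyshev quotient is bounded by the standard estimate $1/T_{r-l}\bigl(\tfrac{\kappa+1}{\kappa-1}\bigr)\le 2\bigl(\tfrac{\sqrt\kappa-1}{\sqrt\kappa+1}\bigr)^{r-l}$ with $\kappa=\lambda_b/\lambda_a=\operatorname{cond}(\Psi_z)_{eff}$, which is exactly the estimate underlying Proposition~\ref{prop:1}.
\end{itemize}
Multiplying these bounds gives the claimed inequality.

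The main obstacle is not the polynomial estimate, which is routine, but making the statement's left-hand side precise. The expression $\|\Psi_z^{-1}-\Sigma_rD_r^{-1}\Sigma_r^T\|_{\Psi_z}$ is phrased as a spectral truncation, whereas CGD actually produces a Krylov projection, not an eigenvector truncation. There are two ways to handle this. One is to interpret it, as Proposition~\ref{prop:1} implicitly does, as the $\Psi_z$-norm of the CGD error $\vect{e}_r$. The other, which we would try first, is to note that once the outlying eigenvalues in $\sigma_0$ are annihilated, the remaining error lives in the invariant subspace of $\sigma_1$. Either way, the rest of the argument goes through column by column.
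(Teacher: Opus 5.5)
Your proposal is correct and follows essentially the same route as the paper. The paper also writes the CGD error as $\mathcal{P}_r(\Psi_z)\vect{e}_0$, bounds it by a min--max over the spectrum, and uses $\prod_{i=1}^{l}(1-\lambda/\mu_i)$ times a Chebyshev polynomial of degree $r-l$ on $[\lambda_a,\lambda_b]$; your version, with the explicit normalization $P(0)=1$ and the maximum over all of $\sigma(\Psi_z)$, is cleaner, and the paper's proof likewise ends with a bound on the CGD error $\|\vect{e}_r\|_{\Psi_z}$.

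Commit to that first interpretation and drop the alternative. The actual CGD error need not vanish on the $\sigma_0$ eigenvectors; only your competitor polynomial does. Moreover, the literal spectral truncation $\Sigma_rD_r^{-1}\Sigma_r^T$ can violate the inequality, e.g.\ for a right-hand side along the eigenvector of $\lambda_p\in\sigma_0(\Psi_z)$ with zero initial guess, as soon as $2C\bigl(\frac{\sqrt{\operatorname{cond}(\Psi_z)_{eff}}-1}{\sqrt{\operatorname{cond}(\Psi_z)_{eff}}+1}\bigr)^{r-l}<1$.
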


\begin{proof}
Since $\Psi_{z}$ is symmetric positive definite, it is diagonalizable into a matrix $D_p$ by its eigenmatrix.  Therefore,
\begin{equation}
D_p = \begin{bmatrix}
 D_r & 0\\
 0 & D_q
\end{bmatrix} = \begin{bmatrix}
 \Sigma_r^T \Psi_{z} \Sigma_r & 0\\
 0 & \Sigma_q^T \Psi_{z}^T \Sigma_q
\end{bmatrix} 
= \Sigma_p^T \Psi_{z} \Sigma_p,
\label{eq:diagonalization}
\end{equation}
where we define $\Sigma_p \in \mathbb{R}^{p \times p}$ as all eigenvectors of $\Psi_{z}$ in the columns, $\Sigma_r \in \mathbb{R}^{p \times r}$ as the first $r$ eigenvectors of $\Psi_{z}$ in the columns, and $\Sigma_q \in \mathbb{R}^{p \times (p-r)}$ as the last $p-r$ eigenvectors of $\Psi_{z}$ in the columns.

Since $D_p$ is diagonal with non-zero entries, $D_p$ is invertible, so we can apply the inverse to both sides of \eqref{eq:diagonalization}:
\begin{equation}
    \Psi_{z}^{-1} = \Sigma_p D_p^{-1} \Sigma_p^T = \Sigma_r D_r^{-1} \Sigma_r^T + \Sigma_q D_q^{-1} \Sigma_q^T.
\end{equation}
Therefore, we can write an $r$-rank approximation of $\Psi_{z}^{-1}$ as
\begin{equation}
(\Psi_{z}^{-1})_r = \Sigma_r D_r^{-1} \Sigma_r^T,
\end{equation}
and the corresponding error of the $r$-rank approximation solution to $\Psi_{z}K^T = \Psi_{x}^T$ is
\begin{equation}
    \vect{e}_r = K^T - (K^T)_r = \Psi_{z}^{-1}\Psi_{x}^T - (\Psi_{z}^{-1})_r\Psi_{x}^T
    \label{eq:e_r_def}
\end{equation}

Since we are utilizing CGD to make the $r$-rank approximation, we can rather rewrite the approximation error as 
\begin{equation}
    \vect{e}_r = \vect{e}_0 + \text{span}\{\Psi_{z}\vect{e}_0, (\Psi_{z})^2\vect{e}_0,...,(\Psi_{z})^r\vect{e}_0\}
    = (I + \sum_{j=1}^{r} \omega_j(\Psi_{z})^j)\vect{e}_0,
    \label{e_r_def_rewrite}
\end{equation}
where $\omega_j$ are the weights to cover the generated Krylov Space and $\vect{e}_0$ is the initial CGD approximation error.  
We can rewrite \eqref{e_r_def_rewrite} in terms of a matrix polynomial expression:
\begin{equation}
\vect{e}_r = \mathcal{P}_r(\Psi_{z})\vect{e}_0,
\label{e_r_mat_poly}
\end{equation}
where $\mathcal{P}_r(\Psi_{z}) = I + \sum_{j=1}^{r} \omega_j(\Psi_{z})^j$.

With the given eigenvalue $\lambda$ and its corresponding eigenvector $\vect{v}$ we obtain,
\begin{equation}
\mathcal{P}_r(\Psi_{z})\vect{v}=\mathcal{P}_r(\lambda)\vect{v},
\end{equation}
and we can rewrite \eqref{e_r_mat_poly} as
\begin{equation}
\vect{e}_r= \sum_{j=1}^{r}\mathcal{P}_r(\lambda_j)\vect{e}_0.
\end{equation}

Applying the definition of the energy norm of $\Psi_{z}$, we get
\begin{equation}
    \| \vect{e}_r\|_{\Psi_{z}}^2 = \sum_{j=1}^{r} [\mathcal{P}_r(\lambda_j)]^2 \| \vect{e}_0\|_{\Psi_{z}}^2.
\end{equation}

To begin constructing an upper bound, we want to choose the eigenvalue that maximizes the right side.  This is dependent upon the chosen polynomials $\mathcal{P}_r$:
\begin{equation}
    \| \vect{e}_r\|_{\Psi_{z}}^2 \leq \max\limits_{\substack{j=1,...,r}} [\mathcal{P}_r(\lambda_j)]^2 \| \vect{e}_0\|_{\Psi_{z}}^2.
    \label{eq:constr_upper_bound}
\end{equation}

    For the eigenvalues $\mu_i$ for $i=1,...,l$ in the set $\sigma_0(\Psi_{z})$, a matrix polynomial $\mathcal{R}_l(\lambda)$ can be defined.  We specifically choose a matrix polynomial that minimizes the effect of extremal eigenvalues contained in $\sigma_0(\Psi_{z})$:

    \begin{equation}
    \mathcal{R}_l(\lambda) = \prod_{i=1}^{l} | 1-\frac{\lambda}{\mu_i}|.
    \label{eq: R_l}
    \end{equation}

    For the eigenvalues $[\lambda_a,\lambda_b]$ in the set $\sigma_1(\Psi_{z})$, a matrix polynomial $\mathcal{Q}_{r-l}(\lambda)$ can be defined.  It is assumed that these eigenvalues are evenly spread out.

    Since these two sets distinctly contain all of the eigenvalues of $\Psi_{z}$, then the matrix polynomials have the following relationship:

    \begin{equation}
    \mathcal{P}_r(\lambda) = \mathcal{R}_l(\lambda)\mathcal{Q}_{r-l}(\lambda).
    \end{equation}

    Substituting into \eqref{eq:constr_upper_bound} and finding the matrix polynomial that minimizes the right side:
    \begin{align}
    \| \vect{e}_r\|_{\Psi_{z}}^2 
    &\leq \min\limits_{\substack{\mathcal{R}_l,\mathcal{Q}_{r-l}}} \max\limits_{\substack{\lambda}}[\mathcal{R}_l(\lambda)]^2[\mathcal{Q}_{r-l}(\lambda)]^2 \| \vect{e}_0\|_{\Psi_{z}}^2 \\
    &\leq \min\limits_{\substack{\mathcal{R}_l}} \max\limits_{\substack{\lambda}}[\mathcal{R}_l(\lambda)]^2\min\limits_{\substack{\mathcal{Q}_{r-l}}} \max\limits_{\substack{\lambda \in \sigma_1(\Psi_{z})}}[\mathcal{Q}_{r-l}(\lambda)]^2 \| \vect{e}_0\|_{\Psi_{z}}^2.
    \end{align}

    Substituting in the chosen matrix polynomial $\mathcal{R}_l(\lambda)$ from \eqref{eq: R_l} and choosing a Chebyshev polynomial form for $\mathcal{Q}_{r-l}(\lambda)$ due to the equal spacing assumption, we obtain

    \begin{equation}
    \| \vect{e}_r\|_{\Psi_{z}}^2 \leq [\max\limits_{\substack{\lambda \in \sigma_1(\Psi_{z})}} \prod_{i=1}^{l} | 1-\frac{\lambda}{\mu_i}|]^2[(2\frac{\sqrt{\frac{\lambda_b}{\lambda_a}} - 1}{\sqrt{\frac{\lambda_b}{\lambda_a}} +1})^{r-l}]]^2 \| \vect{e}_0\|_{\Psi_{z}}^2 \text{, and}\\
    \end{equation}

    \begin{equation}
    \| \vect{e}_r\|_{\Psi_{z}}^2 \leq [C]^2
    [(2\frac{\sqrt{\operatorname{cond}(\Psi_{z})_{eff}} - 1}{\sqrt{\operatorname{cond}(\Psi_{z})_{eff}} +1})^{r-l}]^2 \| \vect{e}_0\|_{\Psi_{z}}^2 .\\
    \end{equation}

    Taking the square root of both sides gives an upper limit error bound with respect to the energy norm:

    \begin{equation}
    ||\vect{e}_r||_{\Psi_{z}} \leq 2C(\frac{\sqrt{\operatorname{cond}(\Psi_{z})_{eff}} - 1}{\sqrt{\operatorname{cond}(\Psi_{z})_{eff}} +1})^{r-l} \| \vect{e}_0\|_{\Psi_{z}}.
    \end{equation}
\end{proof}

Thus, we showed that we obtain a sharper error bound with respect to the covariance matrix, which supports the convergence of our proposed algorithm. The numerical experiments to verify the above analyses are shown in Section \ref{sec:ex1}. 

\begin{remark}
Note that the constructed bound in Proposition \ref{prop:3} will only be sharper than the classical bound in Proposition \ref{prop:1} if either the eigenvalue gap between $\lambda_b$ and $\lambda_{b-1}$ or $\lambda_a$ and $\lambda_{a+1}$ is sizable enough.  This gap is problem-dependent, but often for $\Psi_z$, one of these gaps becomes larger as $N$ increases.
\end{remark}

\section{Equivalent matrix formulations of EnKF algorithms}
\label{sec:matrix_reform_alg}

In this section, we present matrix formulations of the previously introduced EnKF algorithms to facilitate implementation. We further focus on the regime $N\ll p = m$.

Let us denote 
\(\mathbbm{1}_N=(\underbrace{1,\ldots,1}_{N})^T\) 
and introduce the projection on the ``constant vector" (an averaging operator): 
\(Q_{\mathbbm{1}}=\frac{1}{N}\mathbbm{1}_N\mathbbm{1}_N^T\) and denote $R_N=(N-1)R$, recalling $R\in\mathbb{R}^{p\times p}$, for notation simplicity. For $Q_{\mathbbm{1}}$ we have
\begin{equation}\label{Q-1}
Q_{\mathbbm{1}}^2=Q_{\mathbbm{1}},\quad Q_{\mathbbm{1}}=Q_{\mathbbm{1}}^T, \quad
(I-Q_{\mathbbm{1}})^2=(I-Q_{\mathbbm{1}}),\quad Q_{\mathbbm{1}}(I-Q_{\mathbbm{1}})=0. 
\end{equation}

Given $Z,\widehat{Z}\in\mathbb{R}^{p\times N}$ as defined in Section \ref{sec:EnKF} with $p\in \mathbb{N}$, we can equivalently denote $\widehat Z=Z(I-Q_{\mathbbm{1}})$.

For any vector $\bm w_N\in \mathbb{R}^N$, we have that 
\begin{equation}
\bm w_N - \overline{\bm w}_N:=(I-Q_{\mathbbm{1}})\bm w_N,
\quad \mbox{where}\quad \overline{\bm w}_N=\frac{1}{N}\sum_{i=1}^N[\bm w_N]_i.
\end{equation}

Using these notations and given the EnKF analysis step as presented in Figure \ref{fig:comparison} (left), we can write the following simplified matrix formulation of the EnKF analysis step, seen in Algorithm~\ref{enkf-0}.

\begin{algorithm}[H]
\caption{Exact EnKF Analysis (Matrix Formulation)\label{enkf-0}}
\begin{algorithmic}[1]
\Function{enkf\_exact}{$X^f$}
\State     Set \(Z = H X^f\), \(\overline{\bm{z}}=\frac{1}{N}Z\mathbbm{1}\), and \(\widehat Z=Z(I-Q_{\mathbbm{1}})\); 
\State \(
X^a=X^f+X^f\widehat{Z}^T\left(\widehat{Z}\widehat{Z}^T+R_N\right)^{-1}\left(\left(\bm{y}-\overline{\bm{z}}\right)\mathbbm{1}_N^T-\widehat{Z}\right)
\)
\State \Return{$X^a$}
\EndFunction
\end{algorithmic}
\end{algorithm}

\subsection{EnKF-Reduced Matrix Formulation}

We now rewrite Algorithm~\ref{enkf-0} in another alternative matrix formulation to show a different application of CGD suitable for the specific case $N\ll p$.  We will consider some of the same computational efficient formulation concepts as seen in Ensemble Square Root Filters \cite{Tippett2003, Mandel2006}.

Let $R=LL^T$ where $L \in\mathbb{R}^{p\times p}$ is lower triangular. If $R$ is diagonal, this decomposition is trivial. If $R$ is not diagonal, we assume this decomposition is with a sparse $L$. This leads to the following modification in the last step of Algorithm~\ref{enkf-0}:
\begin{equation}\label{LL-0}
\begin{aligned}
X^a &= X^f\widehat{Z}^TL^{-T}\left(I+L^{-1}\widehat{Z}\widehat{Z}^TL^{-T}\right)^{-1}L^{-1}Y\\
&~~~~~~~~~~~~~~~~~~+X^f\left( I - \widehat{Z}^TL^{-T}\left(I+L^{-1}\widehat{Z}\widehat{Z}^TL^{-T}\right)^{-1}L^{-1}Z\right).
\end{aligned}
\end{equation}

To simplify ~\eqref{LL-0}, we denote \(P=L^{-1}\widehat{Z}\), $\overline{\bm{z}}=\frac{1}{N}Z\mathbbm{1}$, and 
$\widehat{Y}=(\bm{y}-\overline{\bm{z}})\mathbbm{1}^T$  and we have
\begin{equation}\label{LL}
\begin{aligned}
X^a &= 
X^f+ X^f\widehat{Z}^TL^{-T}\left(I+L^{-1}\widehat{Z}\widehat{Z}^TL^{-T}\right)^{-1}L^{-1}(\widehat{Y}-\widehat{Z})\\
&=X^fP^T\left(I+PP^T\right)^{-1}L^{-1}\widehat{Y}+
X^f\left(\underbrace{I - P^T\left(I+PP^T\right)^{-1}P}_{(I+P^TP)^{-1}}\right)\\
&=X^fP^T\left(I+PP^T\right)^{-1}L^{-1}\widehat{Y}+
X^f\left(I+P^TP\right)^{-1}.
\end{aligned}
\end{equation}
(Note that this $P$ is different from $P^a$ or $P^f$ as utilized in earlier sections.) 

We can then utilize the Duncan formula \cite{1944DuncanW-aa}, also referred to as the Sherman-Morrison-Woodbury Formula \cite{1950ShermanJ_MorrisonW-aa, 1950WoodburyM-aa},  to achieve \((I+PP^T)^{-1}=I-P(I+P^TP)^{-1}P^T\)
and to get everything in terms of \((I+P^TP)\in \mathbb{R}^{N\times N}\). Thus, we obtain
\begin{equation}\label{LL-2}
\begin{aligned}
X^fP^T
&\;
(I-P(I+P^TP)^{-1}P^T)L^{-1}\widehat{Y}\\
&=
X^f\left(P^T-(P^TP+I-I)(I+P^TP)^{-1}P^T\right)L^{-1}\widehat{Y}\\
&=X^f(I+P^TP)^{-1}P^T L^{-1}\widehat{Y}.
\end{aligned}
\end{equation}
This gives the following second matrix formulation of the EnKF analysis step in Algorithm \ref{enkf-reduced}.  Note that this reformulation is called EnKF-Reduced as the computational complexity of the data assimilation step is dominated by the ensemble size $N$ rather than the number of observation components $p$.  Thus, we recover some additional efficiency gains with EnKF-Reduced in the special case where $N \ll p$.

\begin{algorithm}[H]
\caption{EnKF-Reduced Analysis (Matrix Formulation 2) \label{enkf-reduced}}
\begin{algorithmic}[1]
\Function{EnKF\_Reduced}{$X^f$}
\State Set $Z = H X^f$, $R_N=L L^T$, $P=L^{-1}\widehat{Z}\in\mathbb{R}^{p\times N}$, and 
$\overline{\bm{z}}=\frac{1}{N}Z\mathbbm{1}$; 
\State
\(
X^a = X^f\left(I+P^TP\right)^{-1}\left[I + P^TL^{-1}\left(\bm{y}-\overline{\bm{z}}\right)\mathbbm{1}_N^T
\right]
\)
\State \Return{$X^a$}
\EndFunction
\end{algorithmic}
\end{algorithm}

\begin{remark}
    If $R$ is diagonal or if  the fill-in in $L$ from $R_N=LL^T$ is $\mathcal{O}(p)$ then the action of 
$L^{-1}$ on each column of $Z$ require no more than $\mathcal{O}(p)$ multiplications. 
\end{remark}

\subsection{CGD-EnKF-Reduced Matrix Formulation}

With the above formulation in Algorithm \ref{enkf-reduced}, the analysis step is combined into a compressed one step update.  This update allows for the classical application of CGD on the entire analysis step, as has been seen in previous works applying CGD to EnKF algorithms \cite{Bardsley2013,Bardsley2013-2}.  This will be referred to as CGD-EnKF-Reduced.  We have the following result regarding the CGD algorithm for computing the action of 
low rank perturbations of the identity, such as $\left(I+L^{-1}\widehat{Z}\widehat{Z}^TL^{-T}\right)^{-1}$.
\begin{lemma}\label{l-1} If no round-off error and $N < p$, then solving the linear system 
\[
\left(I+L^{-1} \widehat{Z} \widehat{Z}^T L^{-T}\right)\bm{v}=\bm{g}
\] 
requires at most $(N+1)$ CGD steps to find the solution $\bm{v}$.
\end{lemma}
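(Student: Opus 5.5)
The plan is to use the classical fact that, in exact arithmetic, CGD terminates after at most as many steps as the coefficient matrix has distinct eigenvalues. The bound then reduces to counting distinct eigenvalues, for which the low-rank-plus-identity structure is decisive.

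Set $A = I + PP^T$ with $P = L^{-1}\widehat{Z}\in\mathbb{R}^{p\times N}$, as in the derivation of \eqref{LL}. $A$ is symmetric positive definite, since $\bm v^T A\bm v = \|\bm v\|^2 + \|P^T\bm v\|^2>0$ for $\bm v\neq 0$, so CGD is applicable.

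\textbf{Step 1: count the distinct eigenvalues of $A$.} The matrix $PP^T$ is symmetric positive semidefinite with $\operatorname{rank}(PP^T)=\operatorname{rank}(P)\le N<p$. It therefore has at most $N$ nonzero eigenvalues, and the eigenvalue $0$ has multiplicity at least $p-N\ge 1$. Consequently $A$ has the eigenvalue $1$, together with at most $N$ further eigenvalues of the form $1+s_i^2$, where $s_i$ are the nonzero singular values of $P$. Hence $A$ has at most $N+1$ distinct eigenvalues $\lambda_1,\dots,\lambda_d$ with $d\le N+1$. One could sharpen this using $\widehat Z\mathbbm{1}_N=0$, which gives $\operatorname{rank}(\widehat Z)\le N-1$ and hence at most $N$ distinct eigenvalues, but the statement only asks for $N+1$.

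\textbf{Step 2: invoke the polynomial characterization of the CGD error.} This is the same characterization used in the proof of Proposition~\ref{prop:3}: the $r$-th iterate minimizes $\|\bm e_r\|_A$ over $\bm e_r=\mathcal{P}_r(A)\bm e_0$, where $\mathcal{P}_r$ ranges over polynomials of degree at most $r$ with $\mathcal{P}_r(0)=1$. Choose
\[
\mathcal{P}_d(\lambda)=\prod_{i=1}^{d}\left(1-\frac{\lambda}{\lambda_i}\right).
\]
This is admissible because every $\lambda_i\ge 1>0$. It vanishes at every eigenvalue of $A$, so $\mathcal{P}_d(A)=0$ by diagonalizing $A$ in an orthonormal eigenbasis. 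Therefore $\|\bm e_d\|_A\le\|\mathcal{P}_d(A)\bm e_0\|_A=0$, which gives $\bm e_d=0$. Thus CGD reaches the exact solution $\bm v$ in at most $d\le N+1$ steps, or earlier if the residual vanishes first, in which case the iteration stops with the exact solution.

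\textbf{Main obstacle.} The only real subtlety is justifying the optimality property: the CGD error at step $r$ is the $A$-norm minimizer over $\bm e_0+\operatorname{span}\{A\bm e_0,\dots,A^r\bm e_0\}$. This relies on the exact-arithmetic hypothesis, under which the search directions remain $A$-conjugate and the residuals remain orthogonal. I will cite this as the standard CGD theory \cite{Shewchuk1994,Hestenes1952} already used in Section~\ref{sec:errorAnalysis}, rather than reprove it. The eigenvalue count in Step 1 is then routine linear algebra.
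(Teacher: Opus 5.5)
Your proof is correct and follows the paper's route: you bound the number of distinct eigenvalues of $I+PP^T$ by $N+1$ (the paper via the eigenvalue-$1$ eigenspace $\mathcal{V}^\perp$ of dimension at least $p-N$, you via $\operatorname{rank}(PP^T)\le N$), then invoke CGD's exact-arithmetic termination in at most that many steps. The difference is that you make the annihilating polynomial $\prod_{i}(1-\lambda/\lambda_i)$ explicit where the paper cites the minimal-polynomial degree, and your side remark that $\widehat{Z}\mathbbm{1}_N=0$ sharpens the count to $N$ is also correct.
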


\begin{proof} Let 
$P=L^{-1} \widehat{Z}$ with columns $\left\{\bm{p}_i\right\}_{i=1}^N$ and 
$\mathcal{V}=\operatorname{span}\left\{\bm{p}_i\right\}_{i=1}^N$ and let 
$\bm \varphi\in \mathcal{V}^{\perp}$. Then $\bm \varphi$ is an eigenvector of 
$\left(I+PP^T\right)$ corresponding to the eigenvalue $1$. We have
\(\dim \mathcal{V}\le N\), and \(\dim \mathcal{V}^\perp \ge (p-N)\). Therefore, $(I+PP^T)$ has at least $(p-N)$ eigenvalues equal to $1$ and at most $N$ eigenvalues different from $1$. 
Since the eigenvalues are the roots of the minimal polynomial for $\Theta_z=L^{-1} \widehat{Z} \widehat{Z}^T L^{-T}$, it follows that 
the minimal polynomial is of degree at most $(N+1)$. On the other hand, the conjugate gradient method finds 
$(I+P^TP)^{-1} \bm{g}$ in number of iterations less than or equal  to the degree of the minimal polynomial.  
\end{proof}

\begin{remark}
    When $R$ is a diagonal matrix (Recall that $R_N=(N-1)R$), we arrive at
\begin{equation}
    I+P^TP = I+\widehat{Z}^TR_N^{-1}\widehat{Z}.
\end{equation}
Notice that by definition 
\( (I+P^TP) = \left(I+\widehat{Z}^TL^{-T}L^{-1}\widehat{Z}\right)\) 
is an $N\times N$ matrix, and for $N\ll p$, this inverse can be computed directly without using any conjugate gradient.  We may, however, use conjugate gradient with the right-hand side set to be the columns of the identity to obtain an approximation: 
\[
(I+P^TP)^{-1}\approx q_{\text{CGD}}(P^TP),
\]
where $q_{CGD}(x)$ is a polynomial which approximates 
$\frac1x$ \cite{Kraus2012}. Such a polynomial can be constructed using CGD to obtain an approximation to \((I+P^TP)^{-1}\). 
In all cases, the degree of this polynomial is equal to the number of (CGD) steps we take. 
\end{remark}

\begin{remark}
    If $N$ is held fixed and we chase asymptotics in $p\to\infty$, then the CGD allows us to propagate the exact EnKF, i.e. perform the map $X^a\leftarrow X^f$ 
in $\mathcal{O}(p)$ steps.
\end{remark}

\begin{remark}
    If we use parallelized $L^{-1} Z$, then this can be a very fast algorithm given a computationally inexpensive Cholesky factorization.
\end{remark}

\begin{remark}
    While this application of CGD is beneficial for the case $N \ll p$, in the more general case where $N$ is not bounded, it can be shown to be more efficient to utilize CGD-EnKF as presented in Figure \ref{fig:comparison_CG}.
\end{remark}

\begin{remark}
    We can derive sEnKF from EnKF under this matrix formulation.  This derivation is provided in Appendix \href{sec:app}{A}.
\end{remark}

\subsection{Parallel Implementation of CGD-EnKF-Reduced} 
\label{sec:parallel_reduced} 

As stated, CGD-EnKF-Reduced is also parallelizable, as well as certain operations under EnKF-Reduced.  A disucssion of one parallel implementation of the EnKF-Reduced algorithm is provided in \cite{Houtekamer2014}.  We will adopt the same row-block distribution of $p$ across $n_p$ processors as in Section~\ref{sec:parallel_cgd} and demonstrate and discuss the computational cost of parallelized versions of these algorithms.  Specifically, we will separate the cost comparison into two cases: general $R$ structure and tridiagonal Toeplitz $R$.           

  \subsubsection{General complexity}

  The reduced formulations require the Cholesky factorization $R_N = LL^T$ and the computation of $P = L^{-1}\widehat{Z}$ via $N$ forward substitutions.  Let $c_R$ denote the cost of the Cholesky factorization and $c_L$ denote the cost of a single forward solve. 
  The total cost of obtaining $P$ is
  $c_R + N c_L$ where $c_R$ and $Nc_L$  depend on the structure of
  $R$.  For example, general dense $R$ yields $c_R = \mathcal{O}(p^3)$ and $c_L = \mathcal{O}(p^2)$ making the factorization just as expensive as the matrix inversion required by EnKF; however, diagonal or tridiagonal $R$ yield $c_R = \mathcal{O}(p)$ and $c_L = \mathcal{O}(p)$ allowing for a much cheaper factorization.  Since the $N$ columns of $\widehat{Z}$ are independent, the forward
  solves can always be parallelized by distributing columns across
  processors, giving $Nc_L/n_p$ per processor
  regardless of $L$'s structure.  Apart from the Cholesky factorization and formulation of $P$, the remainder of the costs are as typically given for each respective method.  Table~\ref{tab:complexity_general} summarizes the costs given $N \ll p$.  It is worth noting that the three methods are comparable in order of computational cost if $c_R$ and $c_L$ are cheap to compute.  In addition, if CGD-EnKF-Reduced is parallelized and given small $r$, it will be computationally faster compared with EnKF-Reduced.

  \begin{table}[ht]
  \centering
  \footnotesize 
  \caption{Comparison of the analysis step of EnKF-based reduced algorithms on computational complexity for general $R$ where $N \ll p$ and $n_p$ processors available for parallel implementation capabilities.}
  \label{tab:complexity_general}
  \begin{tabular}{l ccc}
  \hline
  \textbf{Operation}
    & \textbf{EnKF-Reduced}
    & \textbf{CGD-EnKF-Red.}
    & \textbf{sEnKF} \\
  \hline
  $R_N\!=\!LL^T$
    & $c_R$
    & $c_R$
    & --- \\
  $P\!=\!L^{-1}\widehat{Z}$
    & $N \cdot c_L$
    & $N \cdot c_L$
    & --- \\
  $N\!\times\!N$ matrix form
    & $P^TP$: $\mathcal{O}(pN^2)$
    & $P^TP$: $\mathcal{O}(pN^2)$
    & --- \\
  $N\!\times\!N$ matrix solve
    &  Direct Inverse: $\mathcal{O}(N^3)$
    & Block CGD: $\mathcal{O}(rN^3)$
    & --- \\
  Ensemble update
    & $\mathcal{O}(pN^2)$
    & $\mathcal{O}(pN^2)$
    & $\mathcal{O}(pN^2)$ \\
  \hline
  \textbf{Total}
    & $c_R\!+\!Nc_L\!+\!\mathcal{O}(pN^2\!+\!N^3)$
    & $c_R\!+\!Nc_L\!+\!\mathcal{O}(pN^2\!+\!rN^3)$
    & $\mathcal{O}(pN^2)$ \\
  \textbf{Parallel ($n_p$ proc.)}
    & $c_R\!+\!\frac{Nc_L}{n_p}\!+\!\mathcal{O}(\frac{pN^2}{n_p}\!+\!N^3)$
    & $c_R\!+\!\frac{Nc_L}{n_p}\!+\!\mathcal{O}(\frac{pN^2}{n_p}\!+\!\frac{rN^3}{n_p})$
    & $\mathcal{O}(\frac{pN^2}{n_p})$ \\
  \hline

  \end{tabular}
  \end{table}

  \subsubsection{Parallel complexity under tridiagonal Toeplitz $R$}
    We will now discuss one such matrix structure of $R$ that results in cheap $c_R$ and $c_L$.  In the numerical experiments under Section~\ref{sec:ex2}, the observation noise covariance is a tridiagonal Toeplitz matrix with diagonal entry $\sigma^2$ and off-diagonal entry
  $\sigma^2\rho$.  The factor $L$ is bidiagonal resulting in
  $c_L = \mathcal{O}(p)$, and $L$ here has a Toeplitz structure resulting in $c_R = \mathcal{O}(1)$. 
  Therefore, $c_R$ and $c_L$ are relatively cheap to produce and therefore, do not dominate the computational cost.  Instead the matrix inversion or inverse approximation will dominate as previously seen in EnKF and CGD-EnKF.  The resulting costs given this tridiagonal Toeplitz $R$ are given in Table~\ref{tab:complexity_impl}.

  \begin{table}[ht]
  \centering
  \footnotesize 
  \caption{Comparison of the analysis step of EnKF-based reduced algorithms on computational complexity for tridiagonal Toeplitz $R$ where $N \ll p$ and $n_p$ processors available for parallel implementation capabilities.}
  \label{tab:complexity_impl}
  \begin{tabular}{l ccc}
  \hline
  \textbf{Operation}
    & \textbf{EnKF-Reduced}
    & \textbf{CGD-EnKF-Red.}
    & \textbf{sEnKF} \\
  \hline
  $P\!=\!L^{-1}\widehat{Z}$
    & $\mathcal{O}(pN/n_p)$
    & $\mathcal{O}(pN/n_p)$
    & --- \\
  $N\!\times\!N$ matrix
    & $\mathcal{O}(pN^2/n_p)$
    & $\mathcal{O}(pN^2/n_p)$
    & --- \\
  $N\!\times\!N$ solve
    & $\mathcal{O}(N^3)$
    & $\mathcal{O}(rN^3/n_p)$
    & --- \\
  Ensemble update
    & $\mathcal{O}(pN^2/n_p)$
    & $\mathcal{O}(pN^2/n_p)$
    & $\mathcal{O}(pN^2/n_p)$ \\
  \hline
  \textbf{Total (per proc.)}
    & $\mathcal{O}(pN^2/n_p\!+\!N^3)$
    & $\mathcal{O}(pN^2/n_p\!+\!rN^3/n_p)$
    & $\mathcal{O}(pN^2/n_p)$ \\
 
  \hline
  \end{tabular}
  \end{table}

\section{Numerical Experiments}
\label{sec:experiments}

In this section, we provide several numerical examples to validate the proposed algorithms and illustrate their accuracy and computational efficiency.  Section \ref{sec:ex1} provides an example providing a state estimation of a constant random valued vector to validate the method's accuracy against known EnKF models and the analysis provided in Section \ref{sec:errorAnalysis}.  Section \ref{sec:ex2} showcases the computational efficiency of CGD-EnKF and CGD-EnKF-Reduced for highly dimensional systems through the benchmark Lorenz-96 model.  The final provided example in Section \ref{sec:ex3} demonstrates the capabilities of CGD-EnKF by considering a Darcy flow model PDE.

The implementation for the experiments in Sections \ref{sec:ex1} and \ref{sec:ex3} are based in Python and were run with CPU on Apple M3; whereas, the experiments in Section \ref{sec:ex2} are based in C with the computations performed by utilizing AWS CPU cores that are needed for computationally heavy examples.

\subsection{Example 1. Simple Test Case: Random Valued Vector}
\label{sec:ex1}

The objective in this first example is to validate the solution convergence and verify our analytical upper bound discussed for our proposed CGD-EnKF in the previous section. 
Here, we consider the approximation of a random valued-vector up to the level of noise.

We define the true constant state of the random value vector as ${\bf x}^t \in \mathbb{R}^m$, where ${\bf x}^t$ is generated from a standard Gaussian centered at ${\bf 0}$.  We set an ensemble size of $N=50$, and the initial ensemble $X_0^a$ is sampled from the same Gaussian distribution as ${\bf x}^t$.  We let $F_k=H_k=I \in \mathbb{R}^{m \times m}$ where $m=p$ for all $k$.

\subsubsection{Example 1a. Error Validation of CGD-EnKF}

First, we validate the error convergence of the CGD-EnKF method as $k \rightarrow \infty$ and demonstrate that the resulting state estimations coincide with those of classical EnKF.  We set $m=p=100$ and let $Q_k=0.5I 
\in \mathbb{R}^{m \times m}$, and the  observations are produced by adding Gaussian noise
\begin{equation}
    {\bf y}_k = H_{k}\vect{x}^{\text{t}} + \vect{v}_k, \qquad \vect{v}_k \sim \mathcal{N}(\vect{0},R_k),
\end{equation}
where $R_k$ is chosen as a dense  form
    \begin{equation}
    R_k = \bigl(\rho + \sigma^2 \bigr)I + \rho S \in \mathbb{R}^{m \times m},
    \end{equation}
    with $\sigma^2 = 0.1$, $\rho = 0.5$.
Here, $S=\mathbf{1}\in\mathbb{R}^{m \times m}$, a matrix of ones.
The $L_2$ error over 2000 filtering cycles is tracked under the conditions described above by computing:
$\|{\bf x}^t - \bar{\bf x}_k^a\|_2$
at each iteration step $k=1,...,2000$.

Figure \ref{fig:CGD_EnKF_Error_Compare} shows decreasing error over the $2000$ iterations and indicates convergence to a minimum error related to the noise of the data assimilation process. 
Moreover, we observe that the CGD-EnKF error converges to the EnKF error and tracks closely to the sEnKF error as $k \rightarrow \infty$. 
We note that the error for sEnKF slightly differs due to the significant off-diagonal contributions $R_k$. As it was discussed before, sEnKF only considers the diagonal and neglects to consider the cross-variance that the off-diagonal elements provide.  For even simple cases such as this random-valued vector approximation, this unaccounted for correlations can cause greater error during portions of the EnKF cycles and an unstable error trajectory as sEnKF attempts to learn the correlations without knowing the full $R_k$.  Yet, our proposed method delivers the same order of error as EnKF and sEnKF.

To further validate our method, we consider the $L_2$ error of CGD-EnKF at the 500th time step, 
$ \|{\bf x}^t - \bar{\bf x}_{500}^a\|_2$,   
and vary the ensemble size $N$.  Figure \ref{fig:CGD_EnKF_Conv_N} confirms that as $N$ increases, the mean state estimation becomes closer to the true state and converges to some level of noise.  Note that this trend holds for any chosen $k>0$ but we have shown the case where $k=500$. This classical test confirms that adding ensemble members leads to more accurate state estimations which is a classical trait of EnKF algorithms.

\begin{figure}[t]
\centering
\begin{subfigure}[t]{0.45\textwidth}
\includegraphics[width=\textwidth]{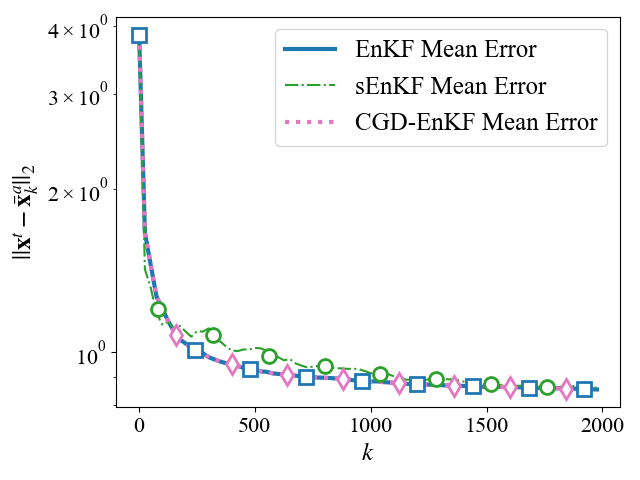}
\caption{}
\label{fig:CGD_EnKF_Error_Compare}
\end{subfigure}
\begin{subfigure}[t]{0.5\textwidth}
\includegraphics[width=\textwidth]{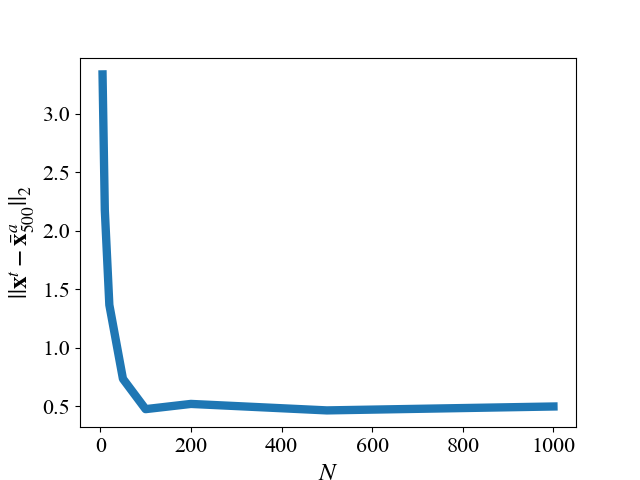}
\caption{}
\label{fig:CGD_EnKF_Conv_N}
\end{subfigure}  
\caption{Example 1a. (a) $\|{\bf x}^t - \bar{\bf{x}}_k^a\|_2$ comparing Kalman-based Ensemble Methods given $m=p=100, N=50$ (b) $L_2$ Error at $k=500$ varying $N$ where $m=p=100$ }
\end{figure}

\subsubsection{Example 1b. Validating CGD Error Bounds and Convergence}

Utilizing the setup given in Section \ref{sec:ex1}, but with $Q_k=0.1I$ and $R_k=I$, we demonstrate that CGD-EnKF observes the upper error bounds given in Section \ref{sec:errorAnalysis}.  Additionally, we verify that the state estimates of CGD-EnKF converge to the EnKF estimates under our convergence proposition of Section \ref{sec:errorAnalysis}.

We compute the CGD Error as earlier defined as 
\begin{equation}
    \vect{e}_r = K^T - (K^T)_r = \Psi_{z}^{-1}\Psi_{x}^T - (\Psi_{z}^{-1})_r\Psi_{x}^T
    \label{eq:e_r_def}
\end{equation}
for each CGD iteration $r$ at the same first step of CGD-EnKF ($k=1$) for the dimensions $m=p=[10,20, 200, 4000]$.  For each dimension, we evaluate the average error over ensemble members at each CGD iteration under the energy norm $||\cdot||_{\Psi_{z}}$, and we set the error tolerance to be $\epsilon = 10^{-12}$ for CGD.  Results for testing the classical error bound in Proposition \ref{prop:1} are shown in Figure \ref{fig:cg_error}. 

The CGD error decreases as the number of CGD iterations increases for all dimensions tested.  This indicates that the solution from CGD-EnKF converges to the true solution as proposed.  Additionally, the CGD error is bounded above by the classical Chebyshev upper error bound.

\begin{figure}[htbp]
    \centering
    \begin{subfigure}[t]{0.45\textwidth}
        \centering
        \includegraphics[width=\linewidth]{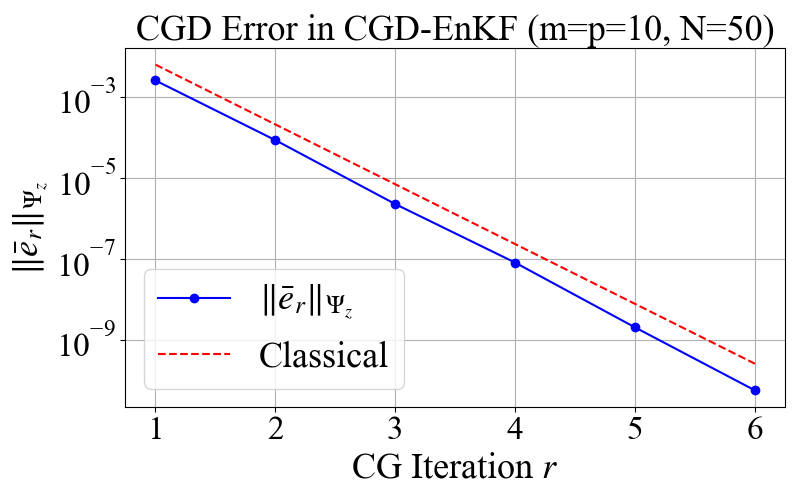}
        \caption{CGD Classical Bound ($m=p=10$)}
    \end{subfigure}
    \hfill
    \begin{subfigure}[t]{0.45\textwidth}
        \centering
        \includegraphics[width=\linewidth]{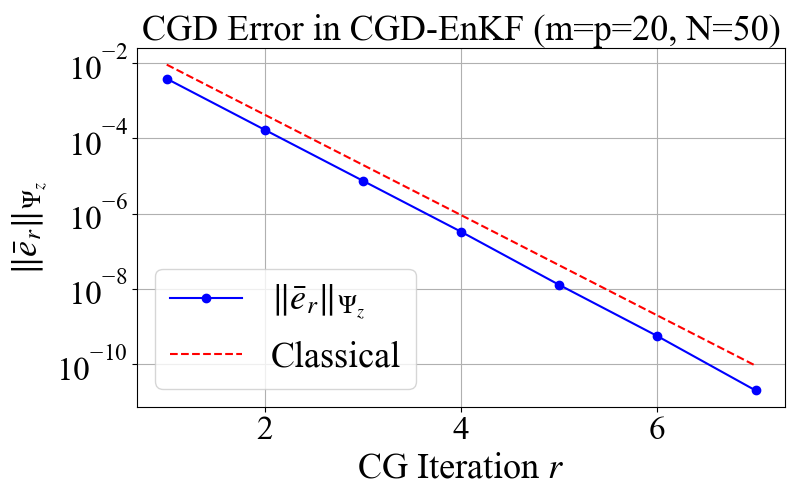}
        \\  
        \caption{CGD Classical Bound ($m=p=20$)}
    \end{subfigure}

    \vspace{0.5cm}

    \begin{subfigure}[t]{0.45\textwidth}
        \centering
        \includegraphics[width=\linewidth]{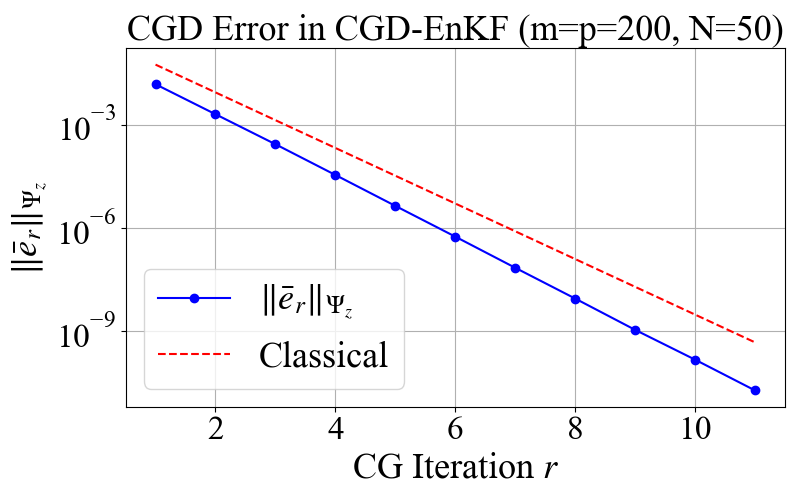}
        \caption{CGD Classical Bound ($m=p=200$)}
    \end{subfigure}
    \hfill
    \begin{subfigure}[t]{0.45\textwidth}
        \centering
        \includegraphics[width=\linewidth]{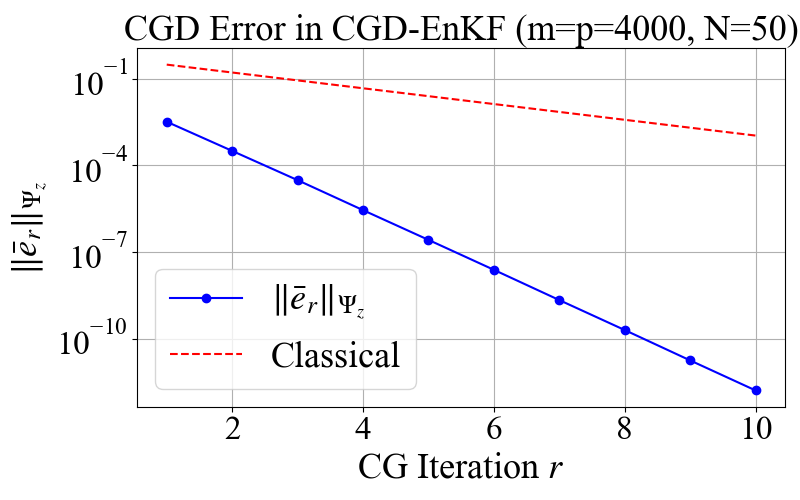}
        \caption{CGD Classical Bound ($m=p=4000$)}
    \end{subfigure}
    \caption{Example 1b.  CGD error and classical error bound \cite{Hestenes1952} for random-valued vector model with different state dimensions}
    \label{fig:cg_error}
\end{figure}

Note that the classical bound is not as sharp when we increase the dimensionality of the problem.  This occurs as the classical error bound assumes equidistant eigenvalues; however, as dimensionality increases, the eigenvalues begin to cluster, and outliers are observed.  Work on error bounds for approximating eigenvalues has been studied and shown deep connections to the error bounds for CGD and other subspace methods \cite{Haas2025}.  The emerging eigenvalues for this particular example are illustrated with $m=4000$ in Figure \ref{fig:better_bound} (a).  Note that two clusters of eigenvalues emerge.  It should also be noted that the majority of the eigenvalues are non-unique and form a cluster at $\mu=1$.  The remaining cluster contains eigenvalues such that $\lambda_i>7$.

We aim to show that the error bound from Proposition \ref{prop:3} also bounds the CGD error for the higher dimensional case, but is sharper than the classical error bound.  We will also consider several alternative proposed heuristic upper bounds:
\begin{itemize}
    \item Initial Design Bound: \begin{align*}\| \vect{e}_r\|_{\Psi_{z}} \leq 2(\frac{\sqrt{\widetilde{\operatorname{cond}}(\Psi_{z})_{\text{eff,2}}} - 1}{\sqrt{\widetilde{\operatorname{cond}}(\Psi_{z})}_{\text{eff,2}} +1})^{r} \| \vect{e}_0\|_{\Psi_{z}}, \widetilde{\operatorname{cond}}(\Psi_{z})_{\text{eff,2}}=\frac{\lambda_b}{\left(\frac{\lambda_a - \lambda_{a+1}}{2}\right)} \end{align*}

    \item Jia Bound - \cite{Jia2001}: 
    \begin{align*}
    \| \vect{e}_r\|_{\Psi_{z}} \leq \frac{4}{\eta}(\frac{\sqrt{\operatorname{cond}(\Psi_{z})} - 1}{\sqrt{\operatorname{cond}(\Psi_{z})} +1})^{2r} \| \vect{e}_0\|_{\Psi_{z}}^2, \quad \text{if } \quad \eta > 1 \\
    \| \vect{e}_r\|_{\Psi_{z}} \leq 2(\frac{\sqrt{\operatorname{cond}(\Psi_{z})} - 1}{\sqrt{\operatorname{cond}(\Psi_{z})} +1})^{r} \| \vect{e}_0\|_{\Psi_{z}}, \quad \text{otherwise }\end{align*}

    \item Li Bound - \cite{Li2005}:
    \begin{align*}
    \| \vect{e}_r\|_{\Psi_{z}} \leq \frac{2 ||E||_2^2}{\eta+\sqrt{\eta^2+4||E||_2^2}}, 
    E = 2(\frac{\sqrt{\operatorname{cond}(\Psi_{z})} - 1}{\sqrt{\operatorname{cond}(\Psi_{z})} +1})^{r} \| \vect{e}_0\|_{\Psi_{z}}, \quad \text{if } \quad \eta > 1 \\
    \| \vect{e}_r\|_{\Psi_{z}} \leq 2(\frac{\sqrt{\operatorname{cond}(\Psi_{z})} - 1}{\sqrt{\operatorname{cond}(\Psi_{z})} +1})^{r} \| \vect{e}_0\|_{\Psi_{z}}, \quad \text{otherwise }
\end{align*}
\end{itemize}
where $\eta = \max{\{\lambda_{a}-\lambda_{a+1}\}}$ for $i=1,...n$.  The Initial Design Bound is a construction of our design that works well only when the eigenvalue clusters for small $\eta$.  This was a precursor to the bound found in Proposition \ref{prop:3}.  The Jia Bound and the Li Bound are defined based on \cite{Jia2001} and \cite{Li2005}, respectively.  These bounds have a slightly steeper slope than the classical, yet still are subject to the widening gap with the actual error for later iterations of CGD.  While not shown in this example, the Li Bound can maintain the classical error bound for the linear-$\operatorname{cond}(\Psi_{z})$ error bound for early iterations of CGD and then can slowly converge to the trajectory for the Jia Bound for later iterations if the two bounds intersect.  Note that these are still heuristic bounds and are not rigorously proven for all cases of eigenvalue distributions.

We now test the error bound from Proposition \ref{prop:3} as well as the heuristic error bounds on the higher dimensional case of $m=4000$.  Results are shown in Figure \ref{fig:better_bound} (b).  Note that the trends described by the heuristic bounds are observed but the proposed bound from Proposition \ref{prop:3} is the sharpest.

\begin{figure}[htbp]
\centering
\begin{subfigure}[t]{0.45\textwidth}
        \centering
        \includegraphics[width=\linewidth]{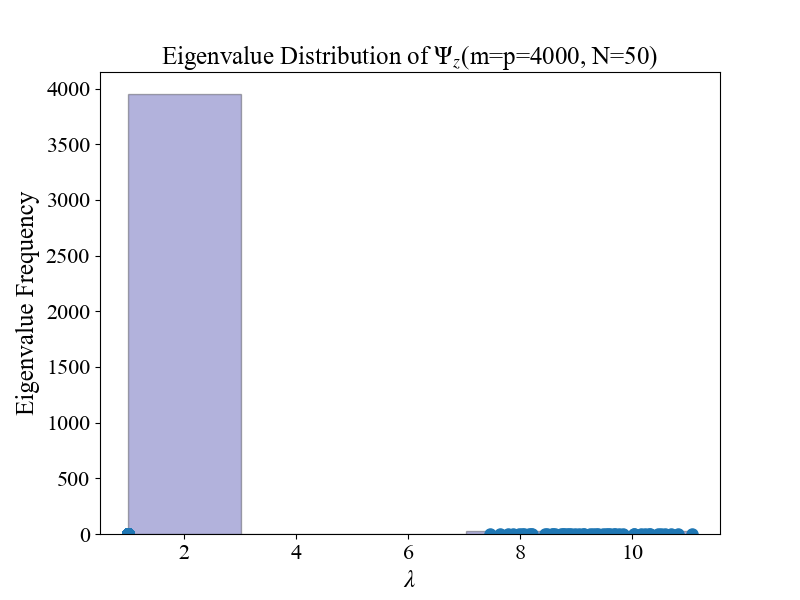}
        \caption{Eigenvalue Distribution of $\Psi_{z}$}
\end{subfigure}
\hfill
\begin{subfigure}[t]{0.5\textwidth}
        \centering        
        \includegraphics[width=\linewidth]{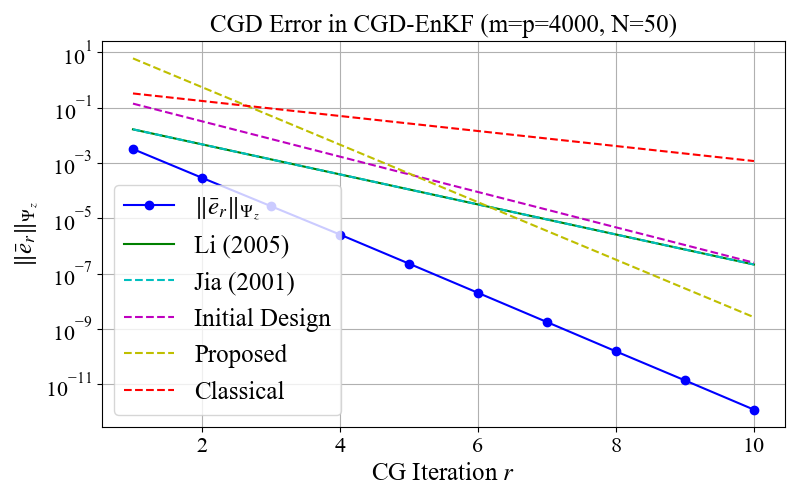}
        \\  
        \caption{$m=p=4000$ with different bounds}
\end{subfigure}
\caption{Example 1b.  Comparisons of CGD error and error bounds}
\label{fig:better_bound}
\end{figure}

\subsection{Example 2. Benchmark test case: Lorenz--96 model}

\label{sec:ex2}
In this second numerical example, we highlight the computational efficiency of CGD-EnKF for general $N$ and CGD-EnKF-Reduced for $N \ll p$ compared with EnKF and sEnKF, in particular, for highly dimensional dynamical systems, under the benchmark Lorenz--96 model. Additionally, we will compare the accuracy of the methods to examine the trade-off between computational time and state estimation error.  We define the Lorenz--96 dynamical system of dimension \(m\) as

\begin{equation}
\frac{d x_{(i)}}{d t}
= \left( x_{(i+1)} - x_{(i-2)} \right) x_{(i-1)} - x_{(i)} + 8,
\qquad i = 1, \ldots, m,
\label{eq:l96}
\end{equation}
with cyclic indices \(x_{i+m} = x_i\) and constant forcing of $8$.  Note that

\begin{equation}
    {\bf x} = \begin{bmatrix}
        x_{(1)} & x_{(2)} & ... & x_{(m)}
    \end{bmatrix}^T
\end{equation}

We define a reference true state trajectory \({\bf x}_{k}^{t} \in \mathbb{R}^m\) for each assimilation step by integrating \eqref{eq:l96} with an initial condition given as
\begin{equation}
{\bf x}_{0}^{t} \sim \mathcal{N}(0, I_m).
\end{equation}

We chose 20 integration steps in order to reach a statistically stationary regime and we set $k=0,1$, indicating only one data assimilation step is carried out. The initial ensemble \(X_0^{a}\in \mathbb{R}^{m\times N}\) is constructed by perturbing the true state with small Gaussian noise:

\begin{equation}
X_0^{a} = 
\left[\, {\bf x}_{0}^{t}, \ldots, {\bf x}_{0}^{t} \,\right]
+ 0.1\, \mathcal{N}(0, I_m).
\label{eq:init_ens}
\end{equation}

The forecast step is given as 
\begin{equation}
{\bf x}_{k+1}^{f} = F_k  {\bf x}_{k}^{a} + {\bf w}_k,
\label{eq:forecast_model}
\end{equation}
where $F_k$ is obtained by discretizing the continuous Lorenz--96 system in \eqref{eq:l96} using a fourth–order Runge--Kutta integrator with time step
$\Delta t = 0.01$ and \({\bf w}_k \sim \mathcal{N}(0, Q_k)\) represents model noise with covariance \(Q_k = I_m\).

At each assimilation step $k$, an observation vector ${\bf y}_k \in \mathbb{R}^p$
is generated from the true state according to
\begin{equation}
{\bf y}_k = H\, {\bf x}_k^{\mathrm{t}} + {\bf v}_k,
\qquad
{\bf v}_k \sim \mathcal{N}(0, R_k),
\label{eq:obs_model}
\end{equation}
where $m=p$, \(H = I \in \mathbb{R}^{m\times m}\), and ${\bf v}_k \sim \mathcal{N}(0, R_k)$ represents the observation noise with tridiagonal covariance matrix with entries given by

\begin{equation}
R_{ii} = \sigma^2, \qquad
R_{i,i\pm1} = \sigma^2\rho,
\end{equation}
where $\sigma > 0$ controls the noise magnitude and $\rho \in [0,0.5)$ controls the nearest-neighbor correlation.  Notably this matrix admits an efficient Cholesky factorization due to its tridiagonal structure.  For this experiment we take $\sigma=0.3$ and $\rho = 0.2$.

\subsubsection{Example 2a. Computational Efficiency of CGD-EnKF}
First, we evaluate the computational time of CGD-EnKF
under the Lorenz--96 model setup and compare it with the sEnKF baseline. In this experiment, we vary the state and observation
dimension simultaneously as
\[
m = p \in \{10,100, 1{,}000, 10{,}000, 100{,}000\}
\]
and the ensemble size is fixed at $N=1000$.

To measure computational efficiency, we record the total wall-clock
time for each analysis method (CGD-EnKF and sEnKF), which
captures their computational scalability with dimension $p$.
All reported wall-clock times correspond to MPI-parallel
implementations executed with 50 CPU cores.

Table~\ref{tab:cg_full_vs_serial_N1000}
presents the wall-clock runtime comparison between CGD-EnKF and the sEnKF baseline across increasing state dimensions.  CGD-EnKF is seen to take only approximately twice as long to run for a single iteration CGD iteration and further iterations adding computational cost.  Compared with other alternative EnKF, CGD-EnKF is seen to be a more scalable alternative to sEnKF while, as Example \ref{sec:ex1}, reducing the computational error that may occur for sEnKF's, diagonal $R$ requirement.

This result also demonstrates the parallelizability of CGD-EnKF allowing for high dimensional in observation space assimilation problems to be feasible in a reasonable amount of time.  This key feature further makes CGD-EnKF comparable to sEnKF whereas EnKF and other nonparallelizable data assimilation methods would take considerable computational expense or even be infeasible on such high dimensional observation spaces.

\begin{table}[ht]                                                   
  \centering                                                                     
  \small                                                                           
  \begin{tabular}{r r r r}                                                         
  \toprule                                                  
  $p$ & CGD Step $r$ & CGD-EnKF Time (s)  & sEnKF Time (s)  \\
  \midrule

  \multirow{4}{*}{10}
    & -- & --                      & $6.01\times10^{-4}$ \\
    & 1  & $2.98\times10^{-3}$     & -- \\
    & 2  & $3.74\times10^{-3}$     & -- \\
    & 3  & $4.54\times10^{-3}$     & -- \\
  \midrule

  \multirow{4}{*}{100}
    & -- & --                      & $6.10\times10^{-3}$ \\
    & 1  & $1.87\times10^{-2}$     & -- \\
    & 2  & $2.47\times10^{-2}$     & -- \\
    & 3  & $2.93\times10^{-2}$     & -- \\
  \midrule

  \multirow{4}{*}{1{,}000}
    & -- & --                      & $8.18\times10^{-2}$ \\
    & 1  & $1.85\times10^{-1}$     & -- \\
    & 2  & $2.56\times10^{-1}$     & -- \\
    & 3  & $3.25\times10^{-1}$     & -- \\
  \midrule

  \multirow{4}{*}{10{,}000}
    & -- & --                      & $2.57\times10^{0}$ \\
    & 1  & $5.05\times10^{0}$      & -- \\
    & 2  & $7.36\times10^{0}$      & -- \\
    & 3  & $9.68\times10^{0}$      & -- \\
  \midrule

  \multirow{4}{*}{100{,}000}
    & -- & --                      & $2.71\times10^{2}$ \\
    & 1  & $5.47\times10^{2}$      & -- \\
    & 2  & $7.90\times10^{2}$      & -- \\
    & 3  & $1.03\times10^{3}$      & -- \\
  \bottomrule
  \end{tabular}
  \caption{Example 2a. Wall-clock runtime comparison between CGD-EnKF and sEnKF given $N=1000$ and 50 CPU cores.}
  \label{tab:cg_full_vs_serial_N1000}
  \end{table}

\subsubsection{Example 2b. Computational Efficiency and Accuracy of CGD-EnKF-Reduced}
Maintaining the setup from Section \ref{sec:ex2}, we now consider the special regime $N \ll p$ by fixing $N=100$ and demonstrate additional computational improvements enabled by the CGD-EnKF-Reduced method that was introduced in Section~\ref{sec:matrix_reform_alg}.
We highlight the method's scalability at higher state dimensions and report the wall-clock time improvements achieved with MPI parallelism.  Additionally, we verify the accuracy benefits of CGD-EnKF-Reduced and demonstrate the tradeoff with computational time.

This example was again executed using MPI with a fixed parallel configuration of
50 CPU cores. We vary the state and observation dimension simultaneously again but as
$$p\in \{ 1{,}000, 10{,}000, 100{,}000, 1{,}000{,}000,10{,}000{,}000 \} $$.

We record the total wall-clock time (reported as the maximum over
MPI ranks) and the relative error of the analysis ensemble at $k=1$ of each method, denoted $X_1^{a,\mathrm{(method)}}$, and for each of the first five CGD iterations in CGD-EnKF-Reduced, in the Frobenius norm computed as

\begin{equation}
\label{eq:rel_frob}
\mathrm{Rel. Error}
\;:=\;
\frac{\left\| X_1^{a,\mathrm{(method)}} - X_1^{a,\mathrm{EnKF-Reduced}} \right\|_F}
     {\left\| X_1^{a,\mathrm{EnKF-Reduced}} \right\|_F}.
\end{equation} 

\begin{table}[p]
\centering
\caption{Example 2b. Wall-clock runtime comparison between EnKF-Reduced, CGD-EnKF-Reduced, and sEnKF given $N = 100$ and 50 CPU cores.}
\begin{tabular}{ccccc}
\toprule
$p$ & CGD steps & Method & Time (s) & Rel.\ error \\
\midrule

\multirow{7}{*}{1,000}
& -- & EnKF-Reduced     & $4.150 \times 10^{-3}$ & -- \\
& 1  & CGD-EnKF-Reduced & $3.168\times 10^{-3}$ & $5.553\times 10^{-1}$ \\
& 2  & CGD-EnKF-Reduced & $3.281\times 10^{-3}$ & $2.918\times 10^{-1}$ \\
& 3  & CGD-EnKF-Reduced & $3.427\times 10^{-3}$ & $1.290\times 10^{-1}$ \\
& 4  & CGD-EnKF-Reduced & $3.587\times 10^{-3}$ & $2.501\times 10^{-2}$ \\
& 5  & CGD-EnKF-Reduced & $3.665\times 10^{-3}$ & $1.922\times 10^{-3}$ \\
& -- & sEnKF            & $2.397\times 10^{-3}$ & $4.301\times 10^{-2}$ \\

\midrule
\multirow{7}{*}{10,000}
& -- & EnKF-Reduced     & $2.846\times 10^{-2}$ & -- \\
& 1  & CGD-EnKF-Reduced & $2.649\times 10^{-2}$ & $9.266\times 10^{-1}$ \\
& 2  & CGD-EnKF-Reduced & $2.692\times 10^{-2}$ & $8.512\times 10^{-1}$ \\
& 3  & CGD-EnKF-Reduced & $2.675\times 10^{-2}$ & $5.410\times 10^{-1}$ \\
& 4  & CGD-EnKF-Reduced & $2.708\times 10^{-2}$ & $1.688\times 10^{-2}$ \\
& 5  & CGD-EnKF-Reduced & $2.735\times 10^{-2}$ & $1.554\times 10^{-4}$ \\
& -- & sEnKF            & $2.328\times 10^{-2}$ & $5.417\times 10^{-2}$ \\

\midrule
\multirow{7}{*}{100,000}
& -- & EnKF-Reduced     & $2.808\times 10^{-1}$ & -- \\
& 1  & CGD-EnKF-Reduced & $2.692\times 10^{-1}$ & $9.920\times 10^{-1}$ \\
& 2  & CGD-EnKF-Reduced & $2.691\times 10^{-1}$ & $9.831\times 10^{-1}$ \\
& 3  & CGD-EnKF-Reduced & $2.693\times 10^{-1}$ & $5.469\times 10^{-1}$ \\
& 4  & CGD-EnKF-Reduced & $2.699\times 10^{-1}$ & $1.528\times 10^{-3}$ \\
& 5  & CGD-EnKF-Reduced & $2.697\times 10^{-1}$ & $1.295\times 10^{-6}$ \\
& -- & sEnKF            & $2.292\times 10^{-1}$ & $7.190\times 10^{-2}$ \\

\midrule
\multirow{7}{*}{1,000,000}
& -- & EnKF-Reduced     & $2.875\times 10^{0}$ & -- \\
& 1  & CGD-EnKF-Reduced & $2.845\times 10^{0}$ & $9.992\times 10^{-1}$ \\
& 2  & CGD-EnKF-Reduced & $2.847\times 10^{0}$ & $9.983\times 10^{-1}$ \\
& 3  & CGD-EnKF-Reduced & $2.844\times 10^{0}$ & $6.135\times 10^{-1}$ \\
& 4  & CGD-EnKF-Reduced & $2.846\times 10^{0}$ & $1.863\times 10^{-4}$ \\
& 5  & CGD-EnKF-Reduced & $2.855\times 10^{0}$ & $2.578\times 10^{-8}$ \\
& -- & sEnKF            & $2.448\times 10^{0}$ & $1.847\times 10^{-1}$ \\

\midrule
\multirow{7}{*}{10,000,000}
& -- & EnKF-Reduced     & $2.968\times 10^{1}$ & -- \\
& 1  & CGD-EnKF-Reduced & $2.942\times 10^{1}$ & $9.999\times 10^{-1}$ \\
& 2  & CGD-EnKF-Reduced & $2.937\times 10^{1}$ & $9.998\times 10^{-1}$ \\
& 3  & CGD-EnKF-Reduced & $2.936\times 10^{1}$ & $5.214\times 10^{-1}$ \\
& 4  & CGD-EnKF-Reduced & $2.937\times 10^{1}$ & $1.168\times 10^{-5}$ \\
& 5  & CGD-EnKF-Reduced & $2.937\times 10^{1}$ & $4.574\times 10^{-10}$ \\
& -- & sEnKF            & $2.448\times 10^{1}$ & $5.758\times 10^{-1}$ \\

\bottomrule
\end{tabular}

\label{tab:enkf_cg_scaling_updated}
\end{table}

Table~\ref{tab:enkf_cg_scaling_updated} shows that all three methods---EnKF-Reduced, CGD-EnKF-Reduced, and sEnKF---exhibit the same order of computational   
  time, since the dominant cost for each is the distributed formation of an $N \times N$ matrix from the $p \times N$ ensemble, which scales linearly in~$p$.  Among them, sEnKF is consistently the fastest, as it avoids the Cholesky factorization of $R_N$ and the $N$~bidiagonal forward substitutions required to form $P = L^{-1}\widehat{Z}$ in the reduced formulations.  EnKF-Reduced is
  slightly slower than sEnKF due to this additional setup cost, while CGD-EnKF-Reduced incurs a modest overhead proportional to the
  number of CG iterations~$r$.

  The key advantage of CGD-EnKF-Reduced lies in accuracy and time tradeoff.  In the initial CGD iterations, sEnKF is computationally more efficient while producing comparable state estimates, signified by comparable relative error.  However, after only a few CGD iterations, the relative error of CGD-EnKF-Reduced decreases rapidly, surpassing that of sEnKF.  This is especially pronounced at high dimensions.  For example, given $p = 10{,}000{,}000$, CGD-EnKF-Reduced with $r = 5$ achieves a relative error on the order of $10^{-10}$, compared to sEnKF's relative error on the order of $10^{-1}$, with less than $20\%$ additional wall-clock time to produce the state estimate.  Moreover, we observe that for a fixed number of CG iterations~$r$, the
  relative error of CGD-EnKF-Reduced decreases as $p$ increases.

  Overall, these results demonstrate that CGD-EnKF-Reduced provides a compelling alternative to sEnKF in the case of sparse non-diagonal $R$: it achieves substantially higher accuracy with only a modest increase in computational cost, and its
  accuracy advantage grows with the problem dimension.

\subsection{Example 3. Benchmark test case: Darcy flow model}
\label{sec:ex3}
In this final example, we assess the performance of CGD-EnKF for estimating the pressure field $u(\mathbf{x})$ based on the estimated probability distribution underlying a heterogeneous permeability field $\kappa(\mathbf{x})$ in a two-dimensional steady Darcy problem:
\begin{equation}
    -\nabla\!\cdot\!\big(\kappa(\mathbf{x})\,\nabla u(\mathbf{x})\big)=\mathbf{0}
\quad \text{in } \Omega=(0,1)^2,
\label{eq:Darcy}
\end{equation}
with Dirichlet boundary conditions \(u=1\) on \(\{x_1=0\}\), \(u=0\) on \(\{x_1=1\}\), and homogeneous Neumann conditions on the remaining boundaries.  Here, we define the permeability as 
\begin{equation}
    \kappa({\bf x}) = c + \sin(2\pi x_1) \cos(2\pi x_2),
\end{equation}
where $\log c \sim \mathcal{N}(0, \sigma_\kappa^2 I)$ with truth $\log\kappa^t$ and Matérn covariance $\sigma_\kappa^2$ with correlation length \(L\).

We aim to approximate only the true pressure field $u^t$.  We sample $N=100$ ensemble members denoted as $\log \kappa_0^{(j)}$ for $j=1,...,100$.  The forward model $F_0$ is given as the discretization scheme of \eqref{eq:Darcy}.  All subsequent assimilation cycles will have a forward model $F_k=I$ for $k=1,2,...,g$.  Observations are noisy
pressure reading at \(p\) sensors given by
\begin{equation}
{\bf y}_k = H_k u^t + {\bf v}_k / k,\quad {\bf v}_k \sim\mathcal N(0,\sigma_y^2 I),
\end{equation}
where $H_k = I$ for $k=1,2,...,g$.  We assume that, in time, our pressure readings become less noisy.

We exponentiate $\log \kappa_0^{(j)}$ to enforce positivity of $\kappa_0^{(j)}$ and apply $F_0$ to the ensemble to solve for the state estimations \(u_1^{(j)}\).  The setup is discretized by conforming
$\mathbb{P}_1$ finite elements on a uniform triangulation of mesh size \(h\) and solved using the continuous Galerkin Finite Element Method (FEM). We make use of the FEniCSx platform for the FEM computation.  We denote the approximated pressure solution at assimilation cycle $k$ as ${\bf u}_{h,k}$ and the discretized true solution as ${\bf u}_{h}^t$. Moreover, linear
systems are solved by conjugate gradients with AMG preconditioning.  Then we carry out CGD-EnKF assimilating ${\bf y}_k$ each cycle for $k=1,2,...,g$.

For our experiment, let $h=0.125$ such that $m=81$.  We set $p=81$ to observe the noisy pressure readings at all discretized points.  Let $\sigma_\kappa^2=0.1$ with $L=0.3$, $\sigma_y^2=0.03$, and $g=10000$ CGD-EnKF iterations.  Additionally, let $\lambda=1.02$ be the multiplicative covariance inflation in the forecast step.  We will also carry out the same experiment described using classical EnKF to verify the results.

Performance is reported via pointwise errors in \(u\), the relative \(L^2\)-errors in \(u\) and the normalized data misfit \(\|H_k\bar{\bf u}_{h,k}-{\bf y}_k\|_{R^{-1}}\).  This
benchmark provides a reproducible setting focused on pressure
identification under observational uncertainty.  Note that since $\log \kappa_0^{(j)}$ is not updated in this first setup, we do not track the permeability.

Figure \ref{fig:3_1_perf} demonstrates that (a) the pressure field solution from CGD-EnKF closely approximates the true pressure field (b) when given an incorrect initial pressure field (c).  The incorrect pressure field is obtained by solving the Darcy problem given an initial ensemble for permeability that has a mean initial permeability field (d) and a true permeability field (e). Figure \ref{fig:CGD-EnKFvsEnKF} further supports the accuracy by showing a reduction of $L^2$ error and data misfit for the CGD-EnKF pressure field solution as $k$ increases.  Additionally, EnKF provides similar error for these quantities over all $k$ assimilation cycles giving further validation of our method.  Therefore, we demonstrate in this example that our CGD-EnKF method can achieve a high degree of accuracy for problems with practical applications in fields such as porous media.

\begin{figure}[htbp]
    \centering
    \begin{subfigure}[t]{0.35\textwidth}
        \centering
        \includegraphics[width=\linewidth]{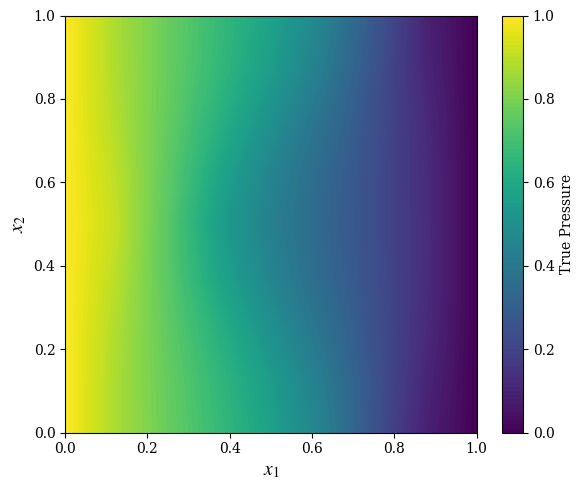}
        \caption{${\bf u}_h^t$}
    \end{subfigure}
    \begin{subfigure}[t]{0.35\textwidth}
        \centering
        \includegraphics[width=\linewidth]{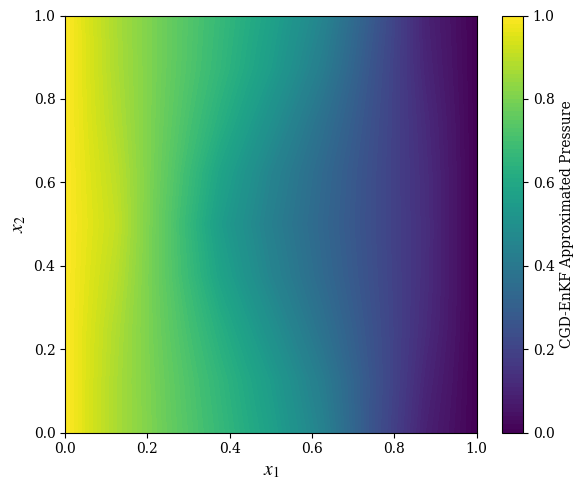}
        \caption{$\bar{{\bf u}}_{h,10000}$ from CGD-EnKF}
    \end{subfigure}
    \begin{subfigure}[t]{0.35\textwidth}
        \centering
        \includegraphics[width=\linewidth]{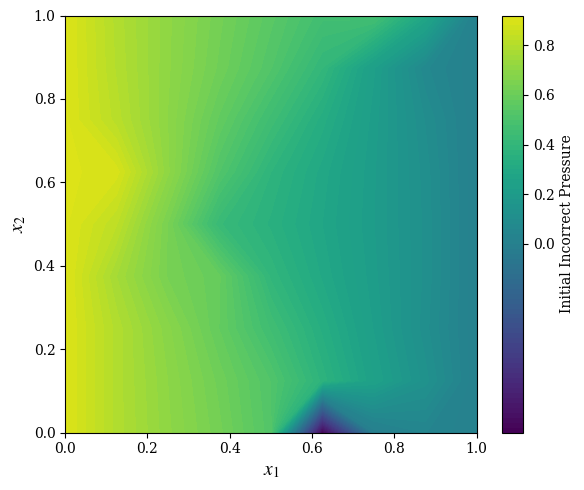}
        \caption{$\bar{{\bf u}}_{h,0}$}
    \end{subfigure}
    \vfill
    \begin{subfigure}[t]{0.35\textwidth}
        \centering
        \includegraphics[width=\linewidth]{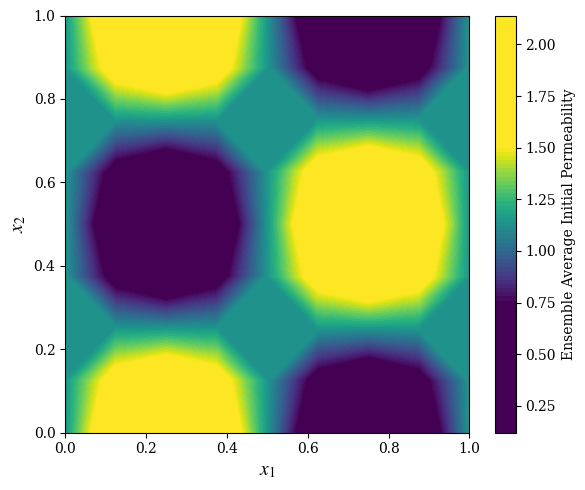}
        \caption{$\kappa^t$}
    \end{subfigure}
    \begin{subfigure}[t]{0.35\textwidth}
        \centering
        \includegraphics[width=\linewidth]{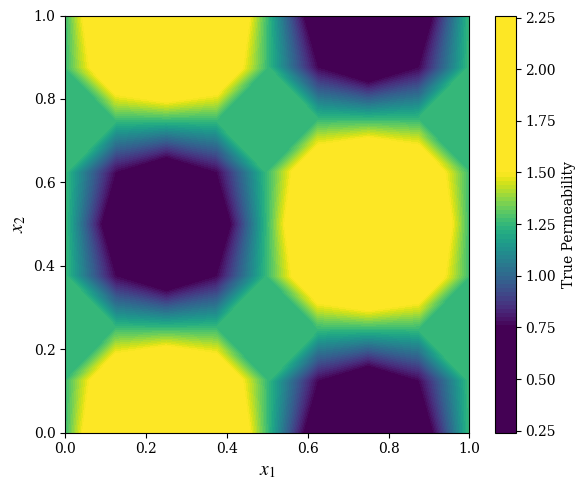}
        \caption{$\bar{\kappa}_0$}
    \end{subfigure}
    \caption{Example 3. Domain Visualization}
    \label{fig:3_1_perf}
\end{figure}

\begin{figure}[htbp]
\centering
    \begin{subfigure}[t]{0.44\textwidth}
        \centering
        \includegraphics[width=\linewidth]{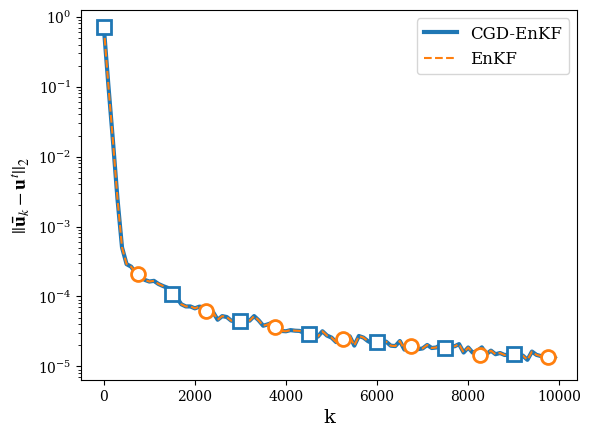}
        \caption{$|| \vect{\bar{u}}_{h,k} - {\bf u}_h^t ||_2$ over $10000$ DA cycles}
    \end{subfigure}
    \begin{subfigure}[t]{0.44\textwidth}
        \centering
        \includegraphics[width=\linewidth]{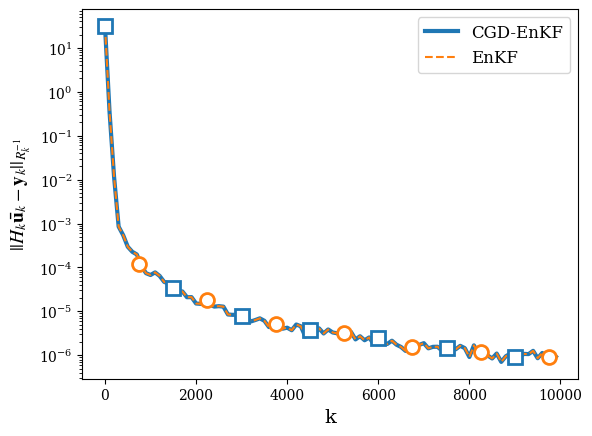}
        \caption{Data Misfit over $10000$ DA cycles}
    \end{subfigure}
    \caption{Example 3. Performance comparing CGD-EnKF and EnKF}
    \label{fig:CGD-EnKFvsEnKF}
\end{figure}

\section{Conclusions}
\label{sec:conclusion}

In this work, we designed a reformulation of EnKF with CGD to create the parallelizable CGD-EnKF algorithm.  We demonstrated that CGD-EnKF is computationally as efficient as sEnKF for high dimensionality.  Additionally, we show that in the presence of error correlations between observation components, reflected in $R$, the CGD-EnKF may outperform serial EnKF and perform near EnKF (up to the CG error) in terms of accuracy.  We provide the error analysis for the CGD algorithm and propose a sharper CGD bound that is proved and validated.  We also demonstrate the further improved computational efficiency on highly dimensional systems of the algorithm CGD-EnKF-Reduced under the specific case where $N \ll p$. Finally, we demonstrated the applicability of CGD-EnKF to several examples, including complicated nonlinear PDE problems such as Darcy Flow by validating that the accuracy of CGD-EnKF nearly coincides with the EnKF state estimation.

This work motivates future directions in applying our reformulation to the Ensemble Kalman Inversion(EKI) method or its derivative methods and solving high dimensional inverse problems in a more computational efficient manner.  The proposed analytical bounds for CGD also suggest future exploration.

\section*{Acknowledgments}
The work of Lee and Valyou was partially supported by the U.S. Department of Energy, Office of Science, Energy Earthshots Initiatives under Award Number DESC-0024703 and by the U.S. National Science Foundation under Grant DMS-2208402.  The work of Zikatanov and Liu was supported in part by the U.S. National Science Foundation (DMS-2208249).

\section*{Appendix A. Derivation of the sEnKF from EnKF}
\label{sec:app}

Below the Algorithm~\ref{enkf-serial-0} is the sEnKF analysis step written in the equivalent matrix notation.
 \begin{algorithm}[H]
\caption{Serial EnKF Analysis (Matrix Formulation)\label{enkf-serial-0}}
\begin{algorithmic}[1]
\Function{EnKF\_serial}{$X^f$}
\State     Set \(Z = H X^f\);
\State $X^a= X^f + X^f\widehat{Z}^T
\left[\operatorname{diag}\left(\widehat{Z}\widehat{Z}^T + R_{N}\right)\right]^{-1}
(Y - Z)$
\State \Return{$X^a$}
\EndFunction
\end{algorithmic}
\end{algorithm}

Below, we derive sEnKF from EnKF. We first derive some simple identities and then show that sEnKF (see Algorithm~\ref{enkf-serial-0})is obtained by approximating the inverse involved in the exact EnKF (see Algorithm~\ref{enkf-0})  by a diagonal matrix. 
We first provide several identities  which show that the matrix formulation used for Algorithm~\ref{enkf-serial-0}  is equivalent to the form shown in Figure \ref{fig:comparison} (right).

If $\bm{z}_j$ denotes the $j$-th row of $Z$ (written as a column vector with $N$-elements) we have
\begin{equation}\label{eq:components}
\begin{aligned}
&\bm{z}_j=Z^T\hat{\vect{e}}_j,\quad
z_{(j,i)}=\hat{\vect{e}}_i^TZ^T\hat{\vect{e}}_j\quad\mbox{and}\quad 
\overline{z}_i=
\frac{1}{N}\sum_{l=1}^N z_{(j,l)} =
\left[Q_{\mathbbm{1}}\bm z_j\right]_l=
\hat{\vect{e}}_l^TQ_{\mathbbm{1}}\bm z_j\in \mathbb{R},\\
&(z_{(j,i)} - \bar{z}_{i})=\hat{\vect{e}}_i^T(\bm z_{j} - Q_{\mathbbm{1}}\bm z_{j})
=\hat{\vect{e}}_i^T(I - Q_{\mathbbm{1}})
\bm z_{j})=\hat{\vect{e}}_i^T(I - Q_{\mathbbm{1}})Z^T\hat{\vect{e}}_{j}\in \mathbb{R}.
\end{aligned}
\end{equation}

Next, using these identities and 
the fact that 
\(\bm{x}_{i} - \overline{\bm x}=X(I-Q_{\mathbbm{1}})\bm{e}_i\) we arrive at 
\begin{equation*}
\begin{aligned}
\sum_{i=1}^N
(\bm{x}_{i} - \overline{\bm x})(z_{(j,i)} - \bar{z}_{i})
&=
\sum_{i=1}^NX(I-Q_{\mathbbm{1}})\hat{\vect{e}}_i\hat{\vect{e}}_i^T(\bm z_{j} - Q_{\mathbbm{1}}\bm z_{j})
=X(I-Q_{\mathbbm{1}})\bm z_{j}.
\end{aligned}
\end{equation*}

Finally, to show that the serial formulations are equivalent, we need an identity for the diagonal elements of \((\widehat{Z}\widehat{Z}^T+R_N)\).

Indeed, if we employ the relations from~\eqref{eq:components} we obtain the relation that is needed:
 \begin{equation}\label{indentity-diag}
 \begin{aligned}
\left[ \operatorname{diag}
\left(\widehat{Z}\widehat{Z}^T+R_N\right)\right]_{jj}
&=
R_{N,jj}+\left(\widehat{Z}^T\hat{\vect{e}}_j\right)^T\left(\widehat{Z}^T\hat{\vect{e}}_j\right)\\
&=
R_{N,jj}+\left((I-Q_{\mathbbm{1}})Z^T\hat{\vect{e}}_j\right)^T\left((I-Q_{\mathbbm{1}})Z^T\hat{\vect{e}}_j\right)\\
&=
R_{N,jj}+\sum_{i=1}^N\left(\hat{\vect{e}}_i^T(I-Q_{\mathbbm{1}})Z^T\hat{\vect{e}}_j\right)^2\\
&=R_{N,{jj}}+\sum_{i=1}^{N} (z_{(j,i)} - \bar{z}_{{i}})^2
\end{aligned}     
\end{equation}

Furthermore, from the exact EnKF, we have that
\begin{equation}\label{X-Z}
\begin{aligned}
X^a&=
X^f+X^f(I-Q_{\mathbbm{1}})Z^T\left(
R_N+Z(I-Q_{\mathbbm{1}})Z^T\right)^{-1}\\
&=X^f + X^f\widehat{Z}^T\left(\widehat{Z}\widehat{Z}^T+R_N\right)^{-1}(Y-Z)
\end{aligned}
\end{equation}

Thus, the sEnKF approximation is:
\begin{equation}
\begin{aligned}
X^a &= X^f + X^f\widehat{Z}^T\left[\operatorname{diag}
\left(\widehat{Z}\widehat{Z}^T+R_N\right)
\right]^{-1}(Y - Z)
\end{aligned}
\end{equation}

\bibliographystyle{elsarticle-num} 
\bibliography{reference}

\end{document}